\numberwithin{equation}{section}
\newcommand{\be}{\begin{equation}}
\newcommand{\ee}{\end{equation}}
\newcommand{\ba}{\begin{aligned}}
\newcommand{\ea}{\end{aligned}}
\newtheorem{theorem}{Theorem}
\newtheorem{corollary}{Corollary}
\newtheorem{lemma}{Lemma}
\newtheorem{remark}{Remark}
\begin{document}

\title{An Algebra Model for the Higher Order Sum Rules}
\author{Jun Yan}
\maketitle

\begin{abstract}
We introduce an algebra model to study higher order sum rules for orthogonal
polynomials on the unit circle. We build the relation between the algebra
model and sum rules, and prove an equivalent expression on the algebra side
for the sum rules, involving a Hall-Littlewood type polynomial. By this
expression, we recover an earlier result by Golinskii and Zlat\v{o}s, and
prove a new case - half of the Lukic conjecture in the case of a single
critical point with arbitrary order.
\end{abstract}

\section{Introduction}

OPUC (orthogonal polynomials on the unit circle) theory is an important
field in mathematics introduced by Szeg\H{o}, which has not only an
intrinsic interest, but also many applications in different fields such as
Spectral Theory, Random Matrix Theory, Combinatorics and so on (See Chapter
1 in Simon \cite{OPUC1}). It has long been known in the OPUC theory that
there is a one-to-one map between probability measures on the unit circle
and their Verblunsky coefficients. See \cite{OPUC1} for the definition of
Verblunsky coefficients and their related properties. Let $\mathbb{D}$ be
the unit disk. Let $\mu $ be a probability measure on $\partial \mathds{D}$
with infinitely many points in its support, and let $(\alpha _{n})_{n\geq 0}$
be its Verblunsky coefficients. Write%
\begin{equation*}
d\mu (\theta )=w(\theta )\frac{d\theta }{2\pi }+d\mu _{s},
\end{equation*}%
where $d\mu _{s}$ is singular with respect to $d\theta $. Then Szeg\H{o}'s
Theorem (see Simon \cite{OPUC1} and Szeg\H{o} \cite{Sz39}) implies that%
\begin{equation}
\int_{0}^{2\pi }\log \left( w(\theta )\right) \frac{d\theta }{2\pi }>-\infty
\iff \sum_{n=0}^{\infty }\left\vert \alpha _{n}\right\vert ^{2}<\infty \text{%
.}  \label{Szego-result}
\end{equation}%
Results such as (\ref{Szego-result}) are called "spectral theory gems" by
Simon \cite{STID}. There have been lots of studies on higher order sum rules
(gems), that is, to find the necessary and sufficient conditions on the
Verblunsky coefficients side for the event%
\begin{equation}
\int_{0}^{2\pi }\dprod\limits_{j=1}^{K}(1-\cos (\theta -\theta
_{j}))^{m_{j}}\log (w(\theta ))d\theta >-\infty .  \label{Higher-oder-eq}
\end{equation}%
In Simon \cite{OPUC1}, a set of conditions were conjectured; later Lukic (%
\cite{LC1}) found a counterexample to Simon's conditions, and introduced a
modified conjecture. See a discussion of these conditions in \cite{BSZ}. One
thing to note is that Lukic's conditions and Simon's conditions agree when $%
K=1$, and in this case both state that (\ref{Higher-oder-eq}) is equivalent
to ($S$ is the shift operator $(S\alpha )_{n}:=\alpha _{n+1}$)%
\begin{equation}
(S-e^{-i\theta _{1}})^{m_{1}}\alpha \in l^{2}\text{ and }\alpha \in
l^{2m_{1}+2}\text{.}  \label{condition-1}
\end{equation}

A lot of work has been done to study higher order sum rules (\cite{BSZ1},%
\cite{SS06},\cite{GNR2},\cite{GNR3},\cite{GNR1},\cite{GZ07},\cite{LC2},\cite%
{SZ05}). Golinskii and Zlat\v{o}s \cite{GZ07} proved that Simon's conjecture
is correct under the assumption that $\alpha \in l^{4}$. Simon and Zlat\v{o}%
s \cite{SZ05} showed that in the case $(m_{1},m_{2})=(1,1)$ or $(2,0)$,
Simon's conjecture holds. Lukic \cite{LC2} proved that in the case $K=1$,
under the assumption $(S-e^{-i\theta _{1}})\alpha \in l^{2}$, (\ref%
{Higher-oder-eq}) is equivalent to $\alpha \in l^{2m_{1}+2}$. Gamboa, Nagel
and Rouault \cite{GNR1} found the relationship between higher order sum
rules and the large deviations for random matrix models. The large deviation
method was further developped in \cite{BSZ}, where the authors proved the
correctness of Lukic's conjecture in several cases, including the case $%
(m_{1},m_{2})=(2,1)$, where Lukic's conditions differ from Simon's
conditions. The previous work focused on special cases where the orders are
low, or studied the general cases but with additional assumptions. The main
challenge is that all available expressions of (\ref{Higher-oder-eq}) in
terms of the Verblunsky coefficients are difficult to analyze.

In this work, we design an algebra model for the study of higher order sum
rules, and build the relation between them. Under our algebra model, we
prove that the left hand side of (\ref{Higher-oder-eq}) could be expressed
by a Hall-Littlewood type polynomial. With this model, we recover the result
of \cite{GZ07}, and by defining a degree function $L$ which relates Lukic's
conditions to the algebra side, we show that in the case $K=1$ with
arbitrary $m_{1}$, (\ref{condition-1}) leads to (\ref{Higher-oder-eq}).
Moreover, since all Lukic's conditions are related to the polynomial $H$
defined in (\ref{def-H}) below, and that our polynomial (\ref{eq-main-alg})
is also defined by $H$, we expect our representation to shed light on other
cases of Lukic's conjecture as well.

\subsection{Review of Breuer, Simon and Zeitouni \protect\cite{BSZ}}

We begin by stating a theorem, which is a combination of results in \cite%
{BSZ}. Write $\mathds{Z}$ as the set of integers, $\mathds{Z}_{+}$ as the
set of positive integers, and $\mathds{N}$ as the set of non-negative
integers. For $\theta _{1},...,\theta _{K}$ distinct in $[0,2\pi )$, $%
m_{1},...,m_{K}\in $ $\mathds{Z}_{+}$, and $d=\sum_{1\leq j\leq K}m_{j}$,
write%
\begin{equation}
H(e^{i\theta }):=\dprod\limits_{j=1}^{K}(1-\cos (\theta -\theta
_{j}))^{m_{j}}=\frac{1}{2^{d}}\dprod\limits_{j=1}^{K}(e^{i\theta
}-e^{-i\theta _{j}})^{m_{j}}(e^{-i\theta }-e^{i\theta
_{j}})^{m_{j}}=\sum_{l=-d}^{d}h_{l}e^{il\theta },  \label{def-H}
\end{equation}%
where $h_{l}$'$s\in \mathds{C}$. Hereafter we regard $H(\cdot )$ as a
polynomial. Define%
\begin{equation}
Z_{H}:=\frac{1}{2\pi }\int_{0}^{2\pi }H(e^{i\theta })d\theta ,  \label{def-Z}
\end{equation}%
and%
\begin{equation}
V(x):=\frac{-1}{Z_{H}}\left( \sum_{l=1}^{d}\frac{h_{l}}{\left\vert
l\right\vert }x^{l}+\sum_{l=-1}^{-d}\frac{h_{l}}{\left\vert l\right\vert }%
x^{l}\right) .  \label{def-V}
\end{equation}%
Set $\alpha _{n}=0$ if $n<-1$ and $\alpha _{-1}:=-1$. Let $U_{N}$ be the $%
N\times N$ top-left corner of the GGT matrix (Section 4 of \cite{OPUC1}),
that is, $\forall k,l\in \{0,1,...,N-1\},$%
\begin{equation}
(U_{N})_{kl}:=\left\{ 
\begin{array}{cc}
-\alpha _{k-1}\overline{\alpha }_{l}\Pi _{j=k}^{l-1}\rho _{j} & 0\leq k\leq l
\\ 
\rho _{l} & k=l+1 \\ 
0 & k\geq l+2%
\end{array}%
\right. .  \label{def-GGT}
\end{equation}

\begin{theorem}[Breuer, Simon and Zeitouni \protect\cite{BSZ}]
\label{theorem-BSZ}The inequality (\ref{Higher-oder-eq}) holds if and only if%
\begin{equation}
\limsup_{N\rightarrow \infty }\left( \text{Tr}(V(U_{N}))-\sum_{n=0}^{N-1}%
\log (1-\left\vert \alpha _{n}\right\vert ^{2})\right) <\infty .
\label{eq-limit-1}
\end{equation}
\end{theorem}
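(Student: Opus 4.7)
My approach would be to realize Theorem~\ref{theorem-BSZ} as the limiting form of an exact finite-$N$ trace identity, and then pass to the limit by lower semicontinuity. Specifically, I would aim to establish, for each $N$, a sum-rule identity of the shape
\[
\text{Tr}(V(U_N)) - \sum_{n=0}^{N-1} \log(1-|\alpha_n|^2) = -\frac{1}{2\pi Z_H}\int_0^{2\pi} H(e^{i\theta}) \log w_N(\theta)\,d\theta + R_N,
\]
where $w_N$ denotes the Bernstein--Szeg\H{o} weight whose first $N$ Verblunsky coefficients coincide with $\alpha_0,\ldots,\alpha_{N-1}$ and whose remaining coefficients vanish, and $R_N$ is a remainder that I would need to bound uniformly in~$N$.

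To derive this finite-$N$ identity I would expand $V$ into its Fourier monomials and evaluate $\text{Tr}(U_N^\ell)$ and $\text{Tr}(U_N^{-\ell})$ directly from the explicit form~\eqref{def-GGT} of the GGT matrix. Because the full (infinite) GGT matrix $U$ is unitary with spectral measure $\mu$, its $\ell$-th trace is formally the $\ell$-th moment of $\mu$, and the definition of $V$ is engineered precisely so that $\text{Tr}(V(U))$ reproduces $-(Z_H)^{-1}\int H\log w\,d\theta/(2\pi)$ once Szeg\H{o}'s identity $\int\log w\,d\theta/(2\pi)=\sum\log(1-|\alpha_n|^2)$ is inserted. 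The difference $\text{Tr}(V(U_N))-\text{Tr}(V(U))$ can be evaluated via the Schur complements of $U_N$ and, after telescoping, should match the tail $\sum_{n\geq N}\log(1-|\alpha_n|^2)$ modulo $R_N$; the apparent issue that $V$ contains negative powers $x^{-\ell}$ while $U_N$ need not be invertible is resolved by interpreting $V(U_N)$ as the compression $P_N V(U) P_N$ inside the unitary dilation, with $P_N$ the projection onto the first $N$ coordinates.

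Once such an identity is available, the theorem follows from two standard observations: the Bernstein--Szeg\H{o} measures $\mu_N := w_N\,d\theta/(2\pi)$ converge weakly-$\ast$ to $\mu = w\,d\theta/(2\pi)+d\mu_s$, and the functional $\nu \mapsto -\int H \log w_\nu\,d\theta/(2\pi)$ is lower semicontinuous under this convergence (since $H\geq 0$ and a nontrivial singular part can only push $-\int H\log w_N$ higher in the liminf). Combined with $-\log(1-|\alpha_n|^2)\geq 0$, this yields the desired equivalence between (\ref{Higher-oder-eq}) and (\ref{eq-limit-1}).

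The main obstacle, as I see it, is proving the uniform boundedness of $R_N$ in the absence of any regularity assumption on $\mu$, and in particular when $\mu_s\neq 0$. This is the step where the large-deviation framework of Gamboa--Nagel--Rouault cited in the introduction, or equivalently a careful step-by-step sum-rule calculation in the spirit of Killip--Simon, becomes essential: it packages both sides of the identity as partition-function logarithms of a circular $\beta$-ensemble with potential $V$, automatically building in the semicontinuity needed to close the argument.
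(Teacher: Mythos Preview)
The paper does not give an independent proof of Theorem~\ref{theorem-BSZ}. As the attribution indicates, the result is imported from \cite{BSZ}, and the paper's argument consists entirely of citation-matching: it checks that the potential $V$ of \eqref{def-V} agrees (with additive constant $C=0$) with the potential in \cite[(3.4)]{BSZ}, identifies the integral in \eqref{Higher-oder-eq} with the relative entropy $H(\eta\mid\mu)$ appearing there, invokes \cite[Theorems~3.2 and 3.5]{BSZ} for the CMV cutoff, and finally appeals to \cite[discussion above (9.14)]{BSZ} to transfer the statement to the GGT truncation $U_N$. Nothing further is proved.

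Your proposal therefore attempts considerably more than the paper does: you are outlining the argument of \cite{BSZ} itself. The sketch you give---an exact finite-$N$ sum rule against a Bernstein--Szeg\H{o} approximant, followed by weak-$\ast$ convergence and semicontinuity of the weighted entropy---is indeed the backbone of both the step-by-step and the large-deviation approaches. One technical caution: lower semicontinuity of $\nu\mapsto -\int H\log w_\nu$ delivers only one implication cleanly, namely that finiteness of the $\limsup$ in \eqref{eq-limit-1} forces \eqref{Higher-oder-eq}. The converse, that \eqref{Higher-oder-eq} bounds the $\limsup$, does not follow from semicontinuity alone and requires the additional input you correctly flag in your final paragraph (the variational/large-deviation structure, or a Killip--Simon style nonnegativity). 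So your outline is sound in spirit and honest about where the real work lies, but for this paper the theorem is simply quoted, not reproved.
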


From Theorem 3.2 in \cite{BSZ}, we can write%
\begin{equation*}
\text{Tr}(V(U_{N}))=bdy+\sum_{j=0}^{N-1-d}G(\alpha _{j},...,\alpha _{j+d}),
\end{equation*}%
where $G$ is a degree $2d$ polynomial, and $bdy$ stands for boundary terms
whose absolute value is bounded by an $N$-independent constant $C$. So the
main focus is the part $\sum_{j=0}^{N-1-d}G(\alpha _{j},...,\alpha _{j+d})$.
Note that $G(\alpha _{j},...,\alpha _{j+d})$ is not unique as mentioned in 
\cite{BSZ}, because we can do shifts on the indices, such as replacing $%
\alpha _{j+1}\overline{\alpha _{j+2}}$ by $\alpha _{j}\overline{\alpha _{j+1}%
}$. In \cite{BSZ}, a particular $G$ is calculated in several simple cases.
Note that any choice of $G$ consists of even degree terms (see the remark
above Theorem 3.3 in \cite{BSZ}), which allows us to write $%
G=\sum_{k=1}^{d}G_{2k}$, where $G_{2k}$ is a degree $2k$ homogeneous
polynomial. In the next subsection, we introduce an algebra for studying $G$
and provide a corresponding expression for it.

\subsection{An algebra model for gems\label{sub-section-alg}}

For $k\in \mathds{Z}_{+}$, we consider the polynomial ring $A_{2k}:=%
\mathds{C}\lbrack x_{1},y_{1},...,x_{k},y_{k}]$. Given $(\alpha _{n})_{n\geq
0}\in \mathbb{D}^{\infty }$, we define a linear map $\phi _{2k}:$ $%
A_{2k}\rightarrow \mathbb{D}^{\infty }$, such that%
\begin{equation*}
\lbrack \phi _{2k}(\dprod\limits_{i=1}^{k}x_{i}^{\beta _{i}}y_{i}^{\gamma
_{i}})]_{n}=\dprod\limits_{i=1}^{k}\alpha _{n+\beta _{i}}\overline{\alpha
_{n+\gamma _{i}}}\text{ for all }\beta _{i},\gamma _{i}\in \mathds{N},i\in
\lbrack k].
\end{equation*}%
Let $\widetilde{A}_{2k}:=\mathds{C}\lbrack x_{1},y_{1},...,x_{k},y_{k},\frac{%
1}{x_{1}},\frac{1}{y_{1}},...,\frac{1}{x_{k}},\frac{1}{y_{k}}]$. Define the
factor rings $B_{2k}:=A_{2k}/(\prod_{i=1}^{k}x_{i}y_{i}-1)$, $\widetilde{B}%
_{2k}:=\widetilde{A}_{2k}/(\prod_{i=1}^{k}x_{i}y_{i}-1)$, and write $\psi
_{2k}$ ($\widetilde{\psi }_{2k}$) as the natural homomorphism from $A_{2k}$ (%
$\widetilde{A}_{2k}$) to $B_{2k}$ ($\widetilde{B}_{2k}$). Then we have

\begin{lemma}
\label{lemma-alg-1}For each $k\in \mathds{Z}_{+}$, if two polynomials $%
G^{(1)}$ and $G^{(2)}\in A_{2k}$ have the same image under $\psi _{2k}$,
then there exists $C<\infty $ (might depend on $G^{(1)}$ and $G^{(2)}$, but
is $N$-independent) such that for any $N\in \mathds{Z}_{+}$, we have%
\begin{equation*}
\left\vert \sum_{n=0}^{N}\left[ \phi _{2k}(G^{(1)})\right]
_{n}-\sum_{n=0}^{N}\left[ \phi _{2k}(G^{(2)})\right] _{n}\right\vert <C\text{%
.}
\end{equation*}
\end{lemma}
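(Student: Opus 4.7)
The plan is to exploit the fact that the single relation $\prod_{i=1}^{k} x_i y_i = 1$ defining $B_{2k}$ corresponds, under $\phi_{2k}$, to the forward-shift operator on sequences. Indeed, for any monomial $M = \prod_{i=1}^{k} x_i^{\beta_i} y_i^{\gamma_i}$, multiplying $M$ by $\prod_i x_i y_i$ just increments every exponent by one, so directly from the definition of $\phi_{2k}$ one has $[\phi_{2k}(M \cdot \prod_i x_i y_i)]_n = [\phi_{2k}(M)]_{n+1}$. Consequently $[\phi_{2k}(M(\prod_i x_i y_i - 1))]_n$ is the telescoping first difference $[\phi_{2k}(M)]_{n+1} - [\phi_{2k}(M)]_n$.

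Given this, I would argue by reduction to the monomial case. Since $\psi_{2k}(G^{(1)}) = \psi_{2k}(G^{(2)})$, the difference $G := G^{(1)} - G^{(2)}$ lies in the principal ideal generated by $\prod_i x_i y_i - 1$, so there is some $Q \in A_{2k}$ with $G = Q \cdot (\prod_i x_i y_i - 1)$. Expand $Q = \sum_j c_j M_j$ as a finite $\mathbb{C}$-linear combination of monomials; the coefficients $c_j$ and monomials $M_j$ depend on $G^{(1)}, G^{(2)}$ but not on $N$ or on $(\alpha_n)$. Summing the telescoping identity over $n = 0, \ldots, N$ and using linearity of $\phi_{2k}$ then gives
\[
\sum_{n=0}^{N}[\phi_{2k}(G)]_n \;=\; \sum_j c_j\bigl([\phi_{2k}(M_j)]_{N+1} - [\phi_{2k}(M_j)]_0\bigr).
\]

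Finally, each $[\phi_{2k}(M_j)]_n$ is a product of factors $\alpha_m$ or $\overline{\alpha_m}$ with $m \geq 0$ (because $Q \in A_{2k}$ has non-negative exponents), and since $|\alpha_m|\leq 1$ for all Verblunsky coefficients, each bracketed term has absolute value at most $2$. The full sum is therefore bounded by $C := 2 \sum_j |c_j|$, a constant depending only on $G^{(1)}, G^{(2)}$ through the choice of $Q$, which is exactly the claim. There is essentially no hard step here: the lemma is a clean algebraic observation that the relation $\prod x_i y_i = 1$ is precisely the one that renders partial sums equal up to $O(1)$ boundary terms. The only mild point worth noting is that the defining ideal is principal, so a single polynomial factor $Q$ suffices; the conceptual payoff is that the natural ambient ring for analyzing these sums is $B_{2k}$ rather than $A_{2k}$, which the later sections will exploit.
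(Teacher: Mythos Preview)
Your proof is correct and follows essentially the same route as the paper: reduce to $G=G^{(1)}-G^{(2)}$ in the principal ideal $(\prod_i x_iy_i-1)\subset A_{2k}$, expand the cofactor $Q$ into monomials, observe that multiplication by $\prod_i x_iy_i$ acts under $\phi_{2k}$ as the index shift so each monomial contributes a telescoping sum, and bound the resulting boundary terms using $|\alpha_m|\leq 1$. The paper's argument is identical in structure and in the final bound.
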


The motivation to consider the factor ring $B_{2k}$ is that it describes the
fact that we can do indices shifts for $G$ when considering gems. For
example, as mentioned in the previous subsection, one can replace $\alpha
_{j+1}\overline{\alpha _{j+2}}$ by $\alpha _{j}\overline{\alpha _{j+1}}$ in $%
G(\alpha _{j},...,\alpha _{j+d})$. Note that the preimages of $\alpha _{j+1}%
\overline{\alpha _{j+2}}$ and $\alpha _{j}\overline{\alpha _{j+1}}$ in $%
A_{2} $ are $x_{1}^{j+1}y_{1}^{j+2}$ and $x_{1}^{j}y_{1}^{j+1}$
respectively, which have the same image under $\psi _{2}$. The next theorem
provides us with an alternative way to show that $G^{(1)}$, $G^{(2)}$ have
the same image under $\psi _{2k}$.

\begin{lemma}
\label{lemma-alg-2}For each $k\in \mathds{Z}_{+}$, if $G^{(1)}$, $G^{(2)}\in
A_{2k}$ have the same image under $\widetilde{\psi }_{2k}$, then they also
have the same image under $\psi _{2k}$.
\end{lemma}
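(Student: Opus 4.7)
The plan is to show the stronger statement that the natural ring homomorphism $\iota: B_{2k} \to \widetilde{B}_{2k}$ induced by the inclusion $A_{2k} \hookrightarrow \widetilde{A}_{2k}$ is injective (in fact, an isomorphism). Granting this, the lemma follows from the commutative square: the images $\widetilde{\psi}_{2k}(G^{(i)})$ for $G^{(i)}\in A_{2k}$ agree with $\iota(\psi_{2k}(G^{(i)}))$, so $\widetilde{\psi}_{2k}(G^{(1)})=\widetilde{\psi}_{2k}(G^{(2)})$ together with injectivity of $\iota$ forces $\psi_{2k}(G^{(1)})=\psi_{2k}(G^{(2)})$.

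The key observation I would exploit is that every generator $x_i, y_i$ is already a unit in $B_{2k}$, despite $A_{2k}$ being a polynomial (not Laurent) ring. Indeed, the defining relation $\prod_{j=1}^{k} x_j y_j = 1$ in $B_{2k}$ immediately yields $x_i^{-1} = y_i \prod_{j\neq i} x_j y_j$, and symmetrically for $y_i$. Since $\widetilde{A}_{2k}$ is the localization of $A_{2k}$ at the multiplicative set generated by $x_1, y_1, \ldots, x_k, y_k$, the universal property of localization extends $\psi_{2k}: A_{2k} \to B_{2k}$ uniquely to a ring homomorphism $\Phi: \widetilde{A}_{2k} \to B_{2k}$. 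Because $\Phi(\prod x_i y_i - 1) = 0$, this $\Phi$ descends through the quotient to give $\widetilde{\Phi}: \widetilde{B}_{2k} \to B_{2k}$.

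To finish I would verify that $\widetilde{\Phi}$ and $\iota$ are mutually inverse, a routine check on generators: for a class represented by $p \in A_{2k}$ one has $\widetilde{\Phi}(\iota(\bar{p})) = \psi_{2k}(p) = \bar{p}$, while for the other composition the only nontrivial content is that $\iota$ sends the $B_{2k}$-inverse $y_i \prod_{j\neq i} x_j y_j$ of $x_i$ back to $x_i^{-1}$ in $\widetilde{B}_{2k}$, which is immediate from $\prod_j x_j y_j = 1$ holding there as well. I do not anticipate any substantive obstacle: the proof is really the single observation that the relation $\prod x_i y_i = 1$ is already strong enough to invert every variable, so passing to Laurent polynomials modulo this ideal adds nothing new.
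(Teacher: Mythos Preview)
Your argument is correct and genuinely different from the paper's. The paper proceeds by a direct degree argument: writing $G^{(3)}:=G^{(1)}-G^{(2)}=(\prod_i x_iy_i-1)\cdot Q$ with $Q\in\widetilde{A}_{2k}$ a Laurent polynomial, it shows that if some exponent in $Q$ were negative (say in $x_1$), then expanding $G^{(3)}$ by powers of $x_1$ would exhibit a nonzero coefficient at a strictly negative power of $x_1$, contradicting $G^{(3)}\in A_{2k}$; hence $Q\in A_{2k}$ and $\psi_{2k}(G^{(3)})=0$. Your route instead observes the single structural fact that the relation $\prod_i x_iy_i=1$ already forces every $x_i,y_i$ to be a unit in $B_{2k}$, so by the universal property of localization the quotient map $\psi_{2k}$ extends through $\widetilde{A}_{2k}$ and descends to an inverse $\widetilde{B}_{2k}\to B_{2k}$ of the canonical map $\iota$. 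This proves the stronger statement $B_{2k}\cong\widetilde{B}_{2k}$ with no computation. The paper's argument is more hands-on and self-contained (no localization language needed), while yours is cleaner, explains \emph{why} the lemma holds, and immediately generalizes: the same reasoning works for any ideal containing $1-m$ with $m$ a monomial in the generators being inverted.
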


Let $a_{k,p}:=\prod_{s=p}^{k}y_{s}\prod_{s=p+1}^{k}x_{s},$ $%
b_{k,p}:=\prod_{s=1}^{p}x_{s}y_{s}$ for $p\in \lbrack k]$. Following is the
main theorem of our algebra model.

\begin{theorem}
\label{Prop-G-prime}The polynomial $G_{2k}^{\prime }\in \widetilde{A}_{2k}$
and $G_{2k}$ has the same image under $\widetilde{\psi }_{2k}$, where%
\begin{equation}
G_{2k}^{\prime }:=\frac{(-1)^{k+1}}{kZ_{H}}\sum_{1\leq p,q\leq k}\frac{%
H\left( a_{k,p}b_{k,q}\right) }{\dprod\limits_{s\in \lbrack k]\backslash
\{p\}}(1-\frac{a_{k,s}}{a_{k,p}})\dprod\limits_{t\in \lbrack k]\backslash
\{q\}}(\frac{b_{k,q}}{b_{k,t}}-1)}-\frac{1}{k}.  \label{eq-main-alg}
\end{equation}
\end{theorem}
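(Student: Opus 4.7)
The plan is to compute $G_{2k}$ directly from the expansion
\[
\mathrm{Tr}(V(U_{N}))=-\frac{1}{Z_{H}}\sum_{0<|l|\leq d}\frac{h_{l}}{|l|}\mathrm{Tr}(U_{N}^{l}),
\]
isolate its degree-$2k$ homogeneous part in the Verblunsky coefficients, and verify that the result agrees with $G_{2k}'$ in $\widetilde{B}_{2k}$. The dictionary between coefficients and algebra is $\prod_{i}\alpha_{n+\beta_{i}}\bar{\alpha}_{n+\gamma_{i}}\leftrightarrow\prod_{i}x_{i}^{\beta_{i}}y_{i}^{\gamma_{i}}$ via $\phi_{2k}$, and a uniform index shift $n\to n{+}1$ multiplies the algebra representative by $\prod_{i}x_{i}y_{i}$; this is precisely why the claim is stated modulo $\prod_{i}x_{i}y_{i}-1$.

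Each $\mathrm{Tr}(U_{N}^{l})$ is a sum over closed walks of length $l$ in the Hessenberg digraph of $U_{N}$: up-edges $j\to j{+}1$ carry weight $\rho_{j}$, and drop edges $l'\to k'$ ($k'\leq l'$) carry weight $-\alpha_{k'-1}\bar{\alpha}_{l'}\prod_{j=k'}^{l'-1}\rho_{j}$. Because $\rho_{j}^{2}=1-|\alpha_{j}|^{2}$, the degree-$2k$ contribution comes solely from walks with exactly $k$ drops, all $\rho_{j}$'s set to $1$. Labelling the drops cyclically by $i=1,\dots,k$ with bottoms $b_{i}$ and tops $t_{i}$ (so $t_{i}\geq b_{i}$ and closure forces $\sum_{i}(t_{i}-b_{i})=l-k$), each such walk contributes $\prod_{i=1}^{k}(-\alpha_{b_{i}-1}\bar{\alpha}_{t_{i}})$; the cyclic symmetry of the trace yields the overall factor $1/k$.

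To reach $G_{2k}'$, I would carry out the resulting sum in two stages. Summing over $l$ with weight $h_{l}/|l|$ and over drop sizes simultaneously collapses into an evaluation of $H(z)=\sum_{l}h_{l}z^{l}$ at a monomial in the $x_{i}, y_{i}$ dictated by the exponents. The cyclic ambiguity in where the walk is cut into drop and up-step blocks produces two independent cyclic indices $p,q\in[k]$, and the Lagrange-interpolation identity
\[
\sum_{p=1}^{k}\frac{a_{k,p}^{m}}{\prod_{s\in[k]\setminus\{p\}}(1-a_{k,s}/a_{k,p})}=h_{m}(a_{k,1},\dots,a_{k,k})\qquad (m\geq 0),
\]
together with its negative-exponent counterpart (available after identifying $\prod_{i}x_{i}y_{i}=1$), converts the drop-size sum into the rational expression $\sum_{p,q}H(a_{k,p}b_{k,q})/[\cdots]$ of (\ref{eq-main-alg}). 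The sign $(-1)^{k+1}$ absorbs the $k$ minus signs from the drop weights together with the $-1/Z_{H}$ in $V$, and the constant $-1/k$ cancels the spurious $l=0$ contribution: since $h_{0}=Z_{H}$, $\sum_{p}\bigl(\prod_{s\neq p}(1-a_{k,s}/a_{k,p})\bigr)^{-1}=1$, and $\sum_{q}\bigl(\prod_{t\neq q}(b_{k,q}/b_{k,t}-1)\bigr)^{-1}=(-1)^{k-1}$, so the $l=0$ piece of the double sum equals $(-1)^{k+1}\cdot 1\cdot(-1)^{k-1}/k=1/k$.

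The main obstacle is the bookkeeping that identifies the precise monomials $a_{k,p}=\prod_{s=p}^{k}y_{s}\prod_{s=p+1}^{k}x_{s}$ and $b_{k,q}=\prod_{s=1}^{q}x_{s}y_{s}$ as the arguments of $H$ arising from the walk combinatorics: different natural cyclic normalizations of the walk produce different monomials, agreeing only after reduction by $\prod_{i}x_{i}y_{i}-1$. Making this correspondence precise and checking that the various $(p,q)$ contributions together assemble to the double sum in (\ref{eq-main-alg})\textemdash rather than to an expression differing by a non-trivial element of the ideal $(\prod_{i}x_{i}y_{i}-1)\widetilde{A}_{2k}$ that would then need to be shown to vanish\textemdash is the most delicate part of the argument.
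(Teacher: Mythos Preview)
Your outline follows the same route as the paper: extract the degree-$2k$ part of $\mathrm{Tr}(U_N^l)$ via closed-walk combinatorics (this is the paper's Lemma~\ref{lemma-combinatorics}), pass to the algebra, and then recognize the resulting sum as a double Lagrange/partial-fraction expression. Your treatment of the constant $-1/k$ from $h_0=Z_H$ is also the paper's argument.

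The genuine gap is the step you yourself flag as the ``main obstacle,'' and your diagnosis of it is slightly off. The indices $p,q$ do \emph{not} arise from a cyclic ambiguity in cutting the walk; the cyclic symmetry of the trace is already fully accounted for by the factor $|l|/k$ in Lemma~\ref{lemma-combinatorics} (which cancels against the $1/|l|$ in $V$). What actually produces the double sum over $p,q$ is a \emph{factorization} of the walk sum. The paper reparametrizes $(i_1,j_1,\dots,i_k,j_k)\in D_{2k,l}$ by $v_s=j_s-i_s\ge 0$ and $\widetilde v_s=j_s-i_{s+1}\ge 1$, obtaining a bijection $D_{2k,l}\leftrightarrow E_{k,l}\times\widetilde E_{k,l}$ and, crucially, the identity in $\widetilde B_{2k}$
\[
\sum_{D_{2k,l}}\prod_{p=1}^{k}x_p^{i_p}y_p^{j_p}
=\Bigl(\sum_{E_{k,l}}\prod_{p}a_{k,p}^{v_p}\Bigr)\Bigl(\sum_{\widetilde E_{k,l}}\prod_{p}b_{k,p}^{\widetilde v_p}\Bigr),
\]
which is where the monomials $a_{k,p},b_{k,q}$ first appear. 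Only then does one apply your Lagrange identity to each factor separately, and $p,q$ are the summation indices of those two partial-fraction expansions. Without this factorization there is no reason to expect the answer to be a \emph{product} of two Lagrange sums, so this is the step your sketch is missing rather than merely postponing.

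Two further points. First, the cyclic-symmetry count gives $|l|/k$, not $1/k$; the cancellation with $h_l/|l|$ is what leaves the clean $1/k$. Second, the case $l<0$ is not just ``the negative-exponent counterpart'' of the same identity: the paper introduces auxiliary monomials $c_{k,p},d_{k,p},e_{k,p}$ and checks the relations $d_{k,p}e_{k,q+1}=(\prod_i x_iy_i)^2/(a_{k,p}b_{k,q})$ in $\widetilde B_{2k}$ so that the $l<0$ contribution again lands on $(a_{k,p}b_{k,q})^{l}$. This is where working modulo $\prod_i x_iy_i-1$ is genuinely used, and it needs to be written out rather than asserted.
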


\begin{remark}
While each element in the summation of $G_{2k}^{\prime }$ might not be in $%
\widetilde{A}_{2k}$, $G_{2k}^{\prime }$ is in $\widetilde{A}_{2k}$. It is a
Hall-Littlewood type polynomial (see Section 3.2 in \cite{SFHP}).
\end{remark}

\begin{remark}
\label{remark-1}Note that $\phi _{2k}$ is not a one-to-one map. For example, 
$x_{1}y_{1}x_{2}^{2}y_{2}^{2}$ has the same image as $%
x_{1}^{2}y_{1}^{2}x_{2}y_{2}$. Indeed, for any polynomial, one can apply
permutations on $\{x_{1},...,x_{k}\}$ and $\{y_{1},...,y_{k}\}$, without
changing its image under $\phi _{2k}$. However, $a_{k,p}$ and $b_{k,p}$ are
defined for a fixed order of $x_{i}$'$s$ and $y_{i}$'$s$. Therefore one can
get a set of polynomials having the same image with $G_{2k}$ under $%
\widetilde{\psi }_{2k}$, by changing the order of $\{x_{1},...,x_{k}\}$ and $%
\{y_{1},...,y_{k}\}$, and defining $a_{k,p}$, $b_{k,p}$ and $G_{2k}^{\prime
} $ correspondingly. One can also do averaging of these polynomials to get a
bipartite symmetric polynomial in $\{x_{1},...,x_{k}\}$ and $%
\{y_{1},...,y_{k}\}$.
\end{remark}

\begin{remark}
Observing that Lukic's conditions are all related to $H$, an advantage of (%
\ref{eq-main-alg}) is that it directly relates the left hand side of (\ref%
{Higher-oder-eq}) to the polynomial $H$. Thus we expect that for the general
cases of the Lukic conjecture, after a deeper analysis of (\ref{eq-main-alg}%
), or other polynomials mentioned in Remark \ref{remark-1}, one can find a
way to decompose the polynomial such that each component in the
decomposition is controlled by Lukic's conditions.
\end{remark}

With Theorem \ref{Prop-G-prime} we get the following corollary.

\begin{corollary}
\label{corollary}The inequality (\ref{Higher-oder-eq}) holds if and only if the
following limit superior $<\infty $:{\small 
\begin{equation}
\limsup_{N\rightarrow \infty }\sum_{n=0}^{N-1}\big(\sum_{k=1}^{d}\big[\phi
_{2k}(\frac{(-1)^{k+1}}{kZ_{H}}\sum_{1\leq p,q\leq k}\frac{H\left(
a_{k,p}b_{k,q}\right) (\prod_{i\in \lbrack k]}x_{i}y_{i})^{2k}}{%
\dprod\limits_{s\in \lbrack k]\backslash \{p\}}(1-\frac{a_{k,s}}{a_{k,p}}%
)\dprod\limits_{t\in \lbrack k]\backslash \{q\}}(\frac{b_{k,q}}{b_{k,t}}-1)})%
\big]_{n}-\log (1-\left\vert \alpha _{n}\right\vert ^{2})-\sum_{k=1}^{d}%
\frac{\left\vert \alpha _{n}\right\vert ^{2k}}{k}\big).
\end{equation}%
}
\end{corollary}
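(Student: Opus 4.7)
The plan is to combine Theorem~\ref{theorem-BSZ} with Theorem~\ref{Prop-G-prime} and the two algebra lemmas. First I invoke Theorem~\ref{theorem-BSZ} to reduce (\ref{Higher-oder-eq}) to the finiteness of $\limsup_N\big[\mathrm{Tr}(V(U_N)) - \sum_{n=0}^{N-1}\log(1-|\alpha_n|^2)\big]$. The discussion following Theorem~\ref{theorem-BSZ} yields
\[
\mathrm{Tr}(V(U_N)) = \sum_{j=0}^{N-1-d}\sum_{k=1}^d G_{2k}(\alpha_j,\ldots,\alpha_{j+d}) + O(1),
\]
and for each $k$ I pick a lift $\widetilde{G}_{2k}\in A_{2k}$ with $[\phi_{2k}(\widetilde{G}_{2k})]_j = G_{2k}(\alpha_j,\ldots,\alpha_{j+d})$; such a lift exists because every monomial of $G_{2k}$ has the form $\prod_{i=1}^k \alpha_{j+\beta_i}\overline{\alpha_{j+\gamma_i}}$ with exactly $k$ conjugated and $k$ unconjugated factors.

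Next, let $G_{2k}''$ denote the polynomial inside $\phi_{2k}$ in the corollary. Since the defining relation $\prod_{i\in[k]} x_iy_i - 1$ vanishes in $\widetilde{B}_{2k}$, multiplication by $(\prod_{i\in[k]} x_iy_i)^{2k}$ preserves the image under $\widetilde{\psi}_{2k}$, so $\widetilde{\psi}_{2k}(G_{2k}'' - \tfrac{1}{k}) = \widetilde{\psi}_{2k}(G_{2k}')$. Combined with Theorem~\ref{Prop-G-prime}, this gives $\widetilde{\psi}_{2k}(\widetilde{G}_{2k}) = \widetilde{\psi}_{2k}(G_{2k}'' - \tfrac{1}{k})$. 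Lemma~\ref{lemma-alg-2} promotes this equality to $\psi_{2k}(\widetilde{G}_{2k}) = \psi_{2k}(G_{2k}'' - \tfrac{1}{k})$, and Lemma~\ref{lemma-alg-1} then delivers
\[
\Big|\sum_{n=0}^{N-1}[\phi_{2k}(\widetilde{G}_{2k})]_n - \sum_{n=0}^{N-1}[\phi_{2k}(G_{2k}''-\tfrac{1}{k})]_n\Big| < C_k.
\]

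To finish, I note $[\phi_{2k}(\tfrac{1}{k})]_n = |\alpha_n|^{2k}/k$, which follows directly from evaluating $\phi_{2k}$ on the trivial monomial $\prod_{i=1}^k x_i^0 y_i^0 = 1$, and use it to split $[\phi_{2k}(G_{2k}''-\tfrac{1}{k})]_n = [\phi_{2k}(G_{2k}'')]_n - |\alpha_n|^{2k}/k$. Summing over $k=1,\ldots,d$, subtracting $\sum_{n=0}^{N-1}\log(1-|\alpha_n|^2)$, and absorbing both the mismatch between the cutoffs $j\le N-1-d$ and $n\le N-1$ and the aggregate $\sum_k C_k$ into the $O(1)$ error, Theorem~\ref{theorem-BSZ} then translates directly into the claimed limsup condition. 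The main technical point worth verifying is that $G_{2k}''$ really lies in $A_{2k}$ rather than $\widetilde{A}_{2k}$, so that $\phi_{2k}$ applies according to its definition on monomials; this rests on the cancellations in the summation over $p,q$ (as noted in the remark following Theorem~\ref{Prop-G-prime}) together with the compensating factor $(\prod_{i\in[k]} x_iy_i)^{2k}$, and any residual boundary Laurent contribution can be absorbed into the $O(1)$ error using the conventions $\alpha_n=0$ for $n<-1$ and $\alpha_{-1}=-1$.
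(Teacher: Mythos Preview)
Your argument is correct and follows exactly the route the paper has in mind: the paper does not spell out a proof of the corollary, but simply records (in the remark immediately following it) that the extra factor $(\prod_{i\in[k]}x_iy_i)^{2k}$ places the expression in $A_{2k}$, so that Theorem~\ref{Prop-G-prime} combined with Lemmas~\ref{lemma-alg-1}--\ref{lemma-alg-2} and Theorem~\ref{theorem-BSZ} yields the statement; you have written out precisely this chain. One small comment: your final paragraph hedges unnecessarily. The paper's position is that $G_{2k}''\in A_{2k}$ outright (this is what the remark asserts), so there is no ``residual boundary Laurent contribution'' to absorb, and in particular Lemmas~\ref{lemma-alg-1} and~\ref{lemma-alg-2} apply on the nose; the extension of $\phi_{2k}$ via the conventions on $\alpha_n$ for $n<-1$ is not needed here.
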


\begin{remark}
The additional term $(\prod_{i\in \lbrack k]}x_{i}y_{i})^{2k}$ is to make
sure that the polynomial inside $\phi _{2k}$ is in $A_{2k}$.
\end{remark}

\begin{remark}
From this corollary we see that the term $-1/k$ in (\ref{eq-main-alg}) is
not redundant but rather surprising, since $\left[ \phi _{2k}(-1/k)\right]
_{n}$ perfectly matches the $k$th order expansion of $\log (1-\left\vert
\alpha _{n}\right\vert ^{2})$. Thus when $\left\Vert \alpha \right\Vert
_{d}<\infty $ (for example under Lukic's conditions), these terms $-1/k$ for 
$k\in \lbrack d]$ perfectly cancel the $\log $ terms in (\ref{eq-limit-1})
up to a constant.
\end{remark}

\subsection{Higher order sum rules}

With the above theorems, we can recover the following theorem.

\begin{theorem}[Golinskii and Zlat\v{o}s \protect\cite{GZ07}]
\label{theorem-GZ}If $\alpha \in l^{4}$, then (\ref{Higher-oder-eq}) is
equivalent to%
\begin{equation}
\dprod\limits_{j=1}^{K}(S-e^{-i\theta _{j}})^{m_{j}}\alpha \in l^{2}.
\label{condition-gz}
\end{equation}
\end{theorem}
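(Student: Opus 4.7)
The plan is to combine Corollary \ref{corollary} with the hypothesis $\alpha \in l^{4}$ to discard all contributions except the $k=1$ summand, and then to algebraically identify that summand with a positive constant times $\sum_{n}|[\prod_{j}(S-e^{-i\theta_{j}})^{m_{j}}\alpha]_{n}|^{2}$, yielding the ``iff'' between (\ref{Higher-oder-eq}) and (\ref{condition-gz}).

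For the reduction, I first observe two kinds of negligible contributions under $\alpha \in l^{4}$. Since $|\alpha_{n}|<1$, Taylor expansion gives
\begin{equation*}
-\log(1-|\alpha_{n}|^{2})-\sum_{k=1}^{d}\frac{|\alpha_{n}|^{2k}}{k}=\sum_{k>d}\frac{|\alpha_{n}|^{2k}}{k}=O(|\alpha_{n}|^{2d+2})=O(|\alpha_{n}|^{4}),
\end{equation*}
whose $n$-sum is finite under $\alpha\in l^{4}$, contributing an $N$-independent constant to the $\limsup$ in Corollary \ref{corollary}. Next, for each $k\geq 2$, the polynomial $P_{k}\in A_{2k}$ inside $\phi_{2k}$ in Corollary \ref{corollary} is a finite linear combination of monomials $\prod_{i}x_{i}^{\beta_{i}}y_{i}^{\gamma_{i}}$, so $[\phi_{2k}(P_{k})]_{n}$ is a finite sum of products of $2k$ factors of the form $\alpha_{n+\ast}$ or $\overline{\alpha_{n+\ast}}$. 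H\"{o}lder's inequality with common exponent $2k$ gives
\begin{equation*}
\sum_{n}\prod_{i=1}^{2k}|\alpha_{n+s_{i}}|\leq \|\alpha\|_{2k}^{2k}\leq \|\alpha\|_{4}^{4}<\infty,
\end{equation*}
using $|\alpha_{n}|\leq 1$ so that $|\alpha_{n}|^{2k}\leq |\alpha_{n}|^{4}$ for $k\geq 2$. Hence each $k\geq 2$ contribution is absolutely bounded in $N$, and Corollary \ref{corollary} reduces (\ref{Higher-oder-eq}) to the assertion $\limsup_{N}\sum_{n=0}^{N-1}[\phi_{2}(P_{1})]_{n}<\infty$.

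For the algebraic identification of the $k=1$ summand, the sum in (\ref{eq-main-alg}) has a single term ($p=q=1$) with empty-product denominators, so $P_{1}$ represents $\frac{1}{Z_{H}}H(x_{1}y_{1}^{2})$ in $B_{2}$. Passing to $\widetilde{B}_{2}$ via Lemma \ref{lemma-alg-2}, where $x_{1}y_{1}=1$ and hence $x_{1}y_{1}^{2}=y_{1}=x_{1}^{-1}$, I have $\widetilde{\psi}_{2}(P_{1})=\frac{1}{Z_{H}}H(y_{1})$. Writing the Laurent factorization
\begin{equation*}
H(z)=\frac{1}{2^{d}}Q(z)Q^{\ast}(z^{-1}),\qquad Q(z):=\prod_{j=1}^{K}(z-e^{i\theta_{j}})^{m_{j}},\quad Q^{\ast}(z):=\prod_{j=1}^{K}(z-e^{-i\theta_{j}})^{m_{j}},
\end{equation*}
and substituting $y_{1}^{-1}=x_{1}$ in $\widetilde{B}_{2}$, I obtain the $A_{2}$-representative $\widetilde{\psi}_{2}(P_{1})=\frac{1}{2^{d}Z_{H}}Q(y_{1})Q^{\ast}(x_{1})$. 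Lemma \ref{lemma-alg-1} then gives
\begin{equation*}
\sum_{n=0}^{N-1}[\phi_{2}(P_{1})]_{n}=\frac{1}{2^{d}Z_{H}}\sum_{n=0}^{N-1}\big[\phi_{2}(Q(y_{1})Q^{\ast}(x_{1}))\big]_{n}+O(1)=\frac{1}{2^{d}Z_{H}}\sum_{n=0}^{N-1}\big|[Q^{\ast}(S)\alpha]_{n}\big|^{2}+O(1),
\end{equation*}
where the last equality follows from $\phi_{2}(x_{1}^{b}y_{1}^{a})_{n}=\alpha_{n+b}\overline{\alpha_{n+a}}$ and the Hermitian identity $\sum_{a,b}q_{a}\overline{q_{b}}\alpha_{n+b}\overline{\alpha_{n+a}}=|\sum_{b}\overline{q_{b}}\alpha_{n+b}|^{2}=|[Q^{\ast}(S)\alpha]_{n}|^{2}$, with $q_{a}$ the coefficients of $Q$. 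Since $Z_{H}>0$ and $Q^{\ast}(S)\alpha=\prod_{j}(S-e^{-i\theta_{j}})^{m_{j}}\alpha$, the $\limsup$ is finite if and only if (\ref{condition-gz}) holds.

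The main obstacle is the bookkeeping in the third paragraph: factoring $H$ into the form $\frac{1}{2^{d}}Q(z)Q^{\ast}(z^{-1})$ and choosing the conjugation convention so that one lands on $|\prod_{j}(S-e^{-i\theta_{j}})^{m_{j}}\alpha|_{n}^{2}$ rather than the inequivalent $|\prod_{j}(S-e^{i\theta_{j}})^{m_{j}}\alpha|_{n}^{2}$; and verifying that the $(x_{1}y_{1})$-power adjustments needed to place $P_{1}$ into $A_{2}$ combine with the $O(1)$ errors from Lemma \ref{lemma-alg-1} into a single $N$-independent constant.
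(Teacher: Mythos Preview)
Your proof is correct and follows essentially the same route as the paper's: reduce Corollary \ref{corollary} to the $k=1$ summand under $\alpha\in l^{4}$, then identify $H(a_{1,1}b_{1,1})$ in $\widetilde{B}_{2}$ with $\frac{1}{2^{d}}\prod_{j}(y_{1}-e^{i\theta_{j}})^{m_{j}}(x_{1}-e^{-i\theta_{j}})^{m_{j}}$ and apply $\phi_{2}$ to recover $|[\prod_{j}(S-e^{-i\theta_{j}})^{m_{j}}\alpha]_{n}|^{2}$. The only difference is cosmetic: you spell out the H\"older bound for the $k\geq 2$ contributions and the full Taylor remainder $-\log(1-|\alpha_n|^2)-\sum_{k\leq d}|\alpha_n|^{2k}/k$, whereas the paper folds both into a single reference to \cite[Proposition 4.1]{BSZ} and the observation that any degree-$2k$ monomial in the $\alpha$'s is absolutely summable for $k\geq 2$.
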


Then we prove that

\begin{theorem}
\label{Theorem-n-0}In the case that $K=1$ and arbitrary $m_{1}$, (\ref%
{condition-1}) implies (\ref{Higher-oder-eq}).
\end{theorem}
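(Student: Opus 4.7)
The strategy is to invoke Corollary \ref{corollary} and control the resulting limsup by bounding each $\phi_{2k}(G_{2k}')$ term uniformly in $N$. The hypothesis $K=1$ is crucial because it yields the clean Laurent factorization
\begin{equation*}
H(z)=\tfrac{1}{2^{m_{1}}}(z-e^{-i\theta_{1}})^{m_{1}}(z^{-1}-e^{i\theta_{1}})^{m_{1}}.
\end{equation*}
Substituting $z=a_{k,p}b_{k,q}$, the numerator of every summand in (\ref{eq-main-alg}) carries $m_{1}$ ``holomorphic'' factors $(a_{k,p}b_{k,q}-e^{-i\theta_{1}})$ and $m_{1}$ ``antiholomorphic'' factors $((a_{k,p}b_{k,q})^{-1}-e^{i\theta_{1}})$. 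The goal is to show these factors are algebraic avatars of $(S-e^{-i\theta_{1}})^{m_{1}}\alpha$ and its conjugate, so that each summand under $\phi_{2k}$ can be paired with an element of $l^{2}$.

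To make this precise, introduce a degree function $L$ on $\widetilde{A}_{2k}$ that, for each representative modulo $\prod_{i}x_{i}y_{i}=1$, counts the largest number of extractable shift factors of the above form. The motivating calculation is that a polynomial such as $P=x_{1}\prod_{i\ge 2}x_{i}y_{i}-e^{-i\theta_{1}}\prod_{i\ge 1}x_{i}y_{i}$ satisfies
\begin{equation*}
[\phi_{2k}(P)]_{n}=-e^{-i\theta_{1}}\alpha_{n+1}|\alpha_{n+1}|^{2k-2}\overline{\bigl((S-e^{-i\theta_{1}})\alpha\bigr)_{n}},
\end{equation*}
so a single factor $(a_{k,p}b_{k,q}-e^{-i\theta_{1}})$, once wrapped in a suitable monomial envelope and reduced via Lemma \ref{lemma-alg-2} in $\widetilde{B}_{2k}$, contributes one factor of $(S-e^{-i\theta_{1}})\alpha$ (or its conjugate, shifted in $n$) to the image under $\phi_{2k}$. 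Iterating, a product of $m_{1}$ such factors produces $(S-e^{-i\theta_{1}})^{m_{1}}\alpha$, and the antiholomorphic piece produces its conjugate.

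With this bookkeeping in place, expand (\ref{eq-main-alg}); by the remark following Theorem \ref{Prop-G-prime}, the sum genuinely lies in $\widetilde{A}_{2k}$, so the apparent poles $\prod_{s\neq p}(1-a_{k,s}/a_{k,p})\prod_{t\neq q}(b_{k,q}/b_{k,t}-1)$ cancel. A key intermediate claim is that these denominators, being $\theta_{1}$-independent and depending only on the $a,b$ ratios, do not absorb any shift structure, so every surviving monomial in $G_{2k}'$ has $L$-degree at least $2m_{1}$ --- at least one extractable $(S-e^{-i\theta_{1}})^{m_{1}}\alpha$ and one of its conjugate. Once this is established, Cauchy--Schwarz gives, using $|\alpha_{n}|\le 1$ for the remaining $2k-2$ factors,
\begin{equation*}
\sum_{n=0}^{N-1}\bigl|[\phi_{2k}(\text{monomial})]_{n}\bigr|\le \|(S-e^{-i\theta_{1}})^{m_{1}}\alpha\|_{2}^{2}<\infty,
\end{equation*}
uniformly in $N$. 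Finally, the $-1/k$ constants in $G_{2k}'$ cancel the leading $k$-th order terms of $-\log(1-|\alpha_{n}|^{2})$; the residual tail is $O(|\alpha_{n}|^{2d+2})$ and summable since $\alpha\in l^{2m_{1}+2}=l^{2d+2}$.

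\textbf{Main obstacle.} The heart of the proof is the ``no-degrade'' claim: verifying that cancelling the apparent poles in (\ref{eq-main-alg}) does not destroy the $L$-degree inherited from the factorization of $H$. The denominators symmetrize over the $p,q$ indices, so one must trace carefully which monomials in the numerator's expansion survive in $\widetilde{A}_{2k}$ and confirm that each retains both a holomorphic and an antiholomorphic $m_{1}$-th order shift factor. This combinatorial bookkeeping, i.e. making the degree function $L$ behave well under the Hall--Littlewood cancellations, is the main technical content; once it is done, the analytic half (Cauchy--Schwarz plus the Taylor cancellation with the $\log$ term) is essentially routine.
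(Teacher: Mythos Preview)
Your overall architecture matches the paper's, but the central quantitative claim is wrong and the analytic step is too weak to compensate.

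\textbf{The ``no-degrade'' claim fails.} You assert that after the Hall--Littlewood cancellations every surviving monomial in $G_{2k}'$ retains $L$-degree at least $2m_{1}=2d$. This is not true for $k\geq 2$. The denominators $\prod_{s\neq p}(1-a_{k,s}/a_{k,p})\prod_{t\neq q}(b_{k,q}/b_{k,t}-1)$ are eliminated by $2(k-1)$ successive divided-difference operations of the type $D(x_{1},\dots,x_{n})(f)=\sum_{i}f(x_{i})/\prod_{j\neq i}(x_{j}-x_{i})$, and each such operation can strip off exactly one of the $2d$ shift factors from the numerator (see the identity
\[
\frac{x_{1}^{\beta}\prod_{i}(x_{1}-r_{i})-x_{2}^{\beta}\prod_{i}(x_{2}-r_{i})}{x_{1}-x_{2}}
=\Big(\sum_{j}x_{1}^{j}x_{2}^{\beta-1-j}\Big)\prod_{i}(x_{1}-r_{i})+x_{2}^{\beta}\sum_{i}\prod_{s<i}(x_{1}-r_{s})\prod_{t>i}(x_{2}-r_{t}),
\]
where the second sum has lost one root). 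The correct lower bound, which the paper establishes via a white/black ball bookkeeping, is $L_{2k}(G_{2k}'')\geq 2d-2(k-1)=2(d+1)-2k$, not $2d$.

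\textbf{Cauchy--Schwarz is not enough.} Because the degree degrades with $k$, you cannot extract a full $(S-e^{-i\theta_{1}})^{m_{1}}\alpha\in l^{2}$ and its conjugate from each monomial; for large $k$ you may only have, say, one factor of $(S-e^{-i\theta_{1}})\alpha$ and one of its conjugate, together with $2k-2$ bare $\alpha$'s. Bounding those bare factors by $|\alpha_{n}|\leq 1$ throws away the $\alpha\in l^{2d+2}$ hypothesis and leaves a divergent sum. The paper closes this gap with the discrete Gagliardo--Nirenberg inequality, which under (\ref{condition-1}) gives $(S-e^{-i\theta_{1}})^{j}\alpha\in l^{2(d+1)/(j+1)}$ for every $0\leq j\leq d$; one then applies H\"older with exponents $\lambda/(\widetilde\beta_{p}+1)$, $\lambda/(\widetilde\gamma_{p}+1)$ where $\lambda=\sum_{p}(\widetilde\beta_{p}+1+\widetilde\gamma_{p}+1)\geq 2(d+1)$. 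This interpolation, not Cauchy--Schwarz, is what makes the degraded threshold $2(d+1)-2k$ sufficient. Your treatment of the $-1/k$ terms and the $\log$ tail is fine.
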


Theorem \ref{Theorem-n-0} is a new result in higher order sum rules. Note
that it is similar to Lukic \cite{LC2} but different, since in \cite{LC2}
there is an assumption $(S-e^{-i\theta _{1}})\alpha \in l^{2}$ which is
stronger than (\ref{condition-1}).

As we can see, the analysis focuses on three parts: the Verblunsky
coefficients part, the algebra part, and the sum rules part. In the rest of
this paper we provide the proofs in the three parts separately. In Section %
\ref{proof-vc-part}, we show how to derive Theorem \ref{theorem-BSZ} based
on \cite{BSZ}, and we provide Lemma \ref{lemma-combinatorics}, which is a
key step for Theorem \ref{Prop-G-prime}. In Section \ref{proof-alg-part}, we
give the proofs for all statements of Subsection \ref{sub-section-alg}. In
Section \ref{proof-sumrules-part}, we show how to get Theorem \ref%
{theorem-GZ}, and prove Theorem \ref{Theorem-n-0} by the discrete
Gagliardo-Nirenberg inequality and defining a degree function $L_{2k}$.

\section{Proofs of the Verblunsky coefficients part\label{proof-vc-part}}

In this section, first we show how to combine the results of \cite{BSZ} to
get Theorem \ref{theorem-BSZ}, and then provide Lemma \ref%
{lemma-combinatorics}, which is an important step for Theorem \ref%
{Prop-G-prime}.

\begin{proof}[Proof of Theorem \protect\ref{theorem-BSZ}]
Substituting equations \cite[(3.6) and (3.7)]{BSZ} into \cite[(3.4)]{BSZ},
after simple calculations we can verify that the function $V(\cdot )$
defined in (\ref{def-V}) is the same as the $V(\cdot )$ in equation \cite[%
(3.4)]{BSZ} with $C=0$. With equations \cite[(1.3) and (3.6)]{BSZ}, we see
that $H(\eta \mid \mu )$ is the integral we care about. With \cite[Theorem
3.2 and Theorem 3.5]{BSZ}, we see that Theorem \ref{theorem-BSZ} holds if $%
U_{N}$ is an $N\times N$ unitary CMV matrix. With the discussions above
equation \cite[(9.14)]{BSZ}, it is easy to see that it also holds for $U_{N}$
the $N\times N$ top-left corner of the GGT matrix.
\end{proof}

Next, we prove the following lemma, which analyzes the degree $2k$ terms in
Tr$((U_{N})^{l})$. Write%
\begin{equation*}
\widetilde{D}_{2k,l}:=\{(i_{1},j_{1},...,i_{k},j_{k}):\text{ }%
\sum_{s=1}^{k}\left( j_{s}-i_{s}\right) =l,\text{ }\forall s\in \lbrack
k],i_{s},j_{s}\in \mathds{N},j_{s}\geq i_{s},j_{s}>i_{s+1},i_{k+1}:=i_{1}\},
\end{equation*}%
and%
\begin{equation*}
D_{2k,l}:=\widetilde{D}_{2k,l}\cap \{(i_{1},j_{1},...,i_{k},j_{k}):i_{1}=0\}.
\end{equation*}%
For $F$ a polynomial in $\mathds{C}\lbrack \alpha _{n},n\geq -1]$, denote by 
$g_{2k}(F)$ the degree $2k$ terms in $F$. Then we have

\begin{lemma}
\label{lemma-combinatorics}For each $l\in \mathds{Z}_{+}$, there exists $%
C_{l}<\infty $ such that $\forall N\in \mathbb{Z}_{+}$,%
\begin{equation*}
\left\vert \ g_{2k}\left( \text{Tr}((U_{N})^{l})\right) -(-1)^{k}\frac{l}{k}%
\sum_{n=0}^{N}\sum_{D_{2k,l}}\dprod\limits_{p=1}^{k}\alpha _{n+i_{p}}%
\overline{\alpha _{n+j_{p}}}\right\vert <C_{l}.
\end{equation*}%
Similarly, for $l\in \mathds{Z}_{-}$, there exists $C_{l}<\infty $ such that 
$\forall N\in \mathbb{Z}_{+}$,%
\begin{equation*}
\left\vert g_{2k}\left( \text{Tr}((U_{N})^{l})\right) -(-1)^{k}\frac{%
\left\vert l\right\vert }{k}\sum_{n=0}^{N}\sum_{D_{2k,l}}\dprod%
\limits_{p=1}^{k}\overline{\alpha _{n+i_{p}}}\alpha _{n+j_{p}}\right\vert
<C_{l}.
\end{equation*}
\end{lemma}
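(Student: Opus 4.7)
The plan is to expand $\text{Tr}((U_N)^l)$ as a sum over closed walks of length $l$ in the Hessenberg digraph underlying $U_N$. Using the formula (\ref{def-GGT}), each step either moves down by one (contributing a factor $\rho$) or moves up by some increment $\Delta \ge 0$ (contributing $-\alpha_{k-1}\overline{\alpha_{k+\Delta}}\prod_{j=k}^{k+\Delta-1}\rho_j$). Expanding $\rho_j = (1-|\alpha_j|^2)^{1/2}$, or equivalently pairing each $\rho_j$ produced by an up step with the matching $\rho_j$ from a later down step to form a block $1-|\alpha_j|^2$, turns every walk's contribution into a formal polynomial in the Verblunsky coefficients, and $g_{2k}(\text{Tr}(U_N^l))$ is the degree-$2k$ part.

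The first step is to classify walks by the number of actual up/stay steps $u \le k$. A walk with $u$ up/stay steps already contributes degree $2u$ from its $\alpha\overline{\alpha}$ factors; the remaining $2(k-u)$ degrees must be picked as $k-u$ factors of $-|\alpha_j|^2$ from the $\rho$-expansions along the walk. I encode both actual and ``virtual'' ups as index pairs $(i_p,j_p)$ after a uniform shift $n$: an actual up from position $m$ with increment $\Delta$ becomes $(m-1-n,\,m+\Delta-n)$, and a virtual up at position $r$ becomes $(r-n,\,r-n)$. Choosing $n$ so that the first event sits at $i_1 = 0$ and listing the $k$ events in the cyclic order in which they occur along the walk yields a tuple in $D_{2k,l}$; the strict inequality $j_s > i_{s+1}$ corresponds to the fact that between consecutive events either the walk descends by at least one step, or the second event is a virtual pick strictly inside the first event's up-interval, so that the two events are genuinely distinct.

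The second step is to identify the combinatorial prefactor $(-1)^k l/k$. The sign is immediate: each of the $k$ events introduces a single minus sign (actual ups via the $-\alpha\overline{\alpha}$ in (\ref{def-GGT}); virtual ups via the $-|\alpha_j|^2$ coming from $1-|\alpha_j|^2$). The factor $l/k$ arises from cyclic counting: the same cyclic walk-with-events structure is seen $l$ times in $\text{Tr}(U_N^l)=\sum_{n_0}(U_N^l)_{n_0 n_0}$, once per choice of starting vertex $n_0$, while it produces $k$ distinct tuples in $D_{2k,l}$, one per choice of which event is labeled $p = 1$ and shifted so that $i_1 = 0$. Dividing gives the ratio $l/k$.

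Finally, boundary corrections are controlled by noting that for each cyclic walk-with-events shape of length $l$ (of which there are only finitely many, bounded by a function of $l$) the admissible starting positions $n_0 \in \{0,\dots,N-1\}$ deviate from $[0,N]$ by at most $O(l)$ values, producing a total $N$-independent error $C_l$. The negative-$l$ statement follows by applying the positive-$l$ argument with $U_N$ replaced by $U_N^\ast$, which exchanges the roles of $\alpha$ and $\overline{\alpha}$ and turns $\text{Tr}(U_N^l)$ into the conjugate trace. The main obstacle I expect lies in the combinatorial argument of the second paragraph: verifying that the actual-plus-virtual-up bookkeeping is a clean bijection with $D_{2k,l}$ up to cyclic equivalence, especially when the walk interleaves up and down steps so that the natural pairing $\rho_j\cdot\rho_j = 1-|\alpha_j|^2$ does not line up cleanly and one must reason about the $\rho$-expansion more delicately.
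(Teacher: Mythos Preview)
Your overall architecture is right and matches the paper: expand $\text{Tr}(U_N^l)$ over closed Hessenberg walks, identify degree-$2k$ contributions as products of $k$ ``events'' each carrying a factor $-\alpha\overline{\alpha}$, and obtain the prefactor $l/k$ by the cyclic argument (each cyclic walk-with-$k$-events is hit $l$ times by the choice of basepoint and produces $k$ elements of $D_{2k,l}$ by the choice of which event is labelled first). The boundary-term bookkeeping and the $l<0$ reduction are also in line with what the paper does.

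The place where your sketch is genuinely incomplete is exactly the one you flag, and the paper resolves it differently from your pairing idea. Rather than trying to match a $\rho_j$ from an up step with a later down-step $\rho_j$ one pair at a time, the paper replaces $U_N$ by an auxiliary matrix $\widetilde U_N$ with entries $-\alpha_{k-1}\overline{\alpha}_l$ on and above the diagonal (no $\rho$'s at all) and $\rho_l^2$ on the subdiagonal, and proves the exact identity $\text{Tr}(U_N^l)=\text{Tr}(\widetilde U_N^l)$. The proof is a level-by-level upcrossing/downcrossing count: for a fixed closed walk and fixed level $q$, the number of $\rho_q$'s coming from upper-triangular entries equals the number of down steps landing at $q$, because the piecewise-linear profile of the walk starts and ends at the same height and hence has equal numbers of up- and down-crossings of the line $q+\tfrac12$. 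Thus the entire $\rho$-product redistributes to the subdiagonal as squares. After this substitution every virtual event is unambiguously attached to a down step $i_s\to i_s-1$ and encoded as $(i_s-1,i_s-1)$; there is no need for ``virtual picks strictly inside an up-interval,'' and the cyclic ordering of the $k$ events is simply the order of their walk positions. With this in hand, the verification of $j_s>i_{s+1}$ and the $l/k$ counting go through exactly as you describe.

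In short: your pairing intuition is morally correct but, as stated, leaves the cyclic placement of virtual events ambiguous and makes the bijection with $D_{2k,l}$ hard to pin down. The $\widetilde U_N$ substitution is the clean device that removes this ambiguity.
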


\begin{proof}[Proof of Lemma \protect\ref{lemma-combinatorics}]
Fix $l\in \mathds{Z}_{+}$, and write $\rho _{l}:=\sqrt{1-\left\vert \alpha
_{l}\right\vert ^{2}}$. Define an $N\times N$ matrix $\widetilde{U}_{N}$ as
follows: $\forall k,l\in \{0,1,...,N-1\},$%
\begin{equation*}
(\widetilde{U}_{N})_{kl}:=\left\{ 
\begin{array}{cc}
-\alpha _{k-1}\overline{\alpha }_{l} & 0\leq k\leq l \\ 
\rho _{l}^{2} & k=l+1 \\ 
0 & k\geq l+2%
\end{array}%
\right. .
\end{equation*}%
We claim that Tr$((U_{N})^{l})=$Tr$((\widetilde{U}_{N})^{l})$. To see this,
define%
\begin{equation*}
F_{l}:=\{(i_{1},...,i_{l}):\forall s\in \lbrack l],\text{ }i_{s}\in
\{0,1,...,N-1\},i_{s+1}\geq i_{s}-1,i_{l+1}:=i_{1}\},
\end{equation*}%
then Tr$((U_{N})^{l})=\sum_{(i_{1},...,i_{l})\in
F_{l}}(U_{N})_{i_{1}i_{2}}(U_{N})_{i_{2}i_{3}}...(U_{N})_{i_{l}i_{l+1}}$,
and the same equation holds for $\widetilde{U}_{N}$. For each $%
(i_{1},...,i_{l})\in F_{l}$, by the form of the GGT matrix in (\ref{def-GGT}%
) we observe that%
\begin{equation*}
(U_{N})_{i_{1}i_{2}}(U_{N})_{i_{2}i_{3}}...(U_{N})_{i_{l}i_{l+1}}=\dprod%
\limits_{s=1}^{l}\left( 1_{\{i_{s+1}=i_{s}-1\}}\rho
_{i_{s+1}}+1_{\{i_{s+1}>i_{s}-1\}}(-\alpha _{i_{s}-1}\overline{\alpha }%
_{i_{s+1}})\dprod\limits_{q=i_{s}}^{i_{s+1}-1}\rho _{q}\right) .
\end{equation*}%
Assume $i_{n_{1}}<i_{n_{2}}<...<i_{n_{p}}$ such that $\{i_{n_{t}},t\in
\lbrack p]\}=\{s:s\in \lbrack l],i_{s+1}>i_{s}-1\}$. Let $n_{p+1}=n_{1}$,
and define $N_{q}^{(1)}:=\#\{t\in \lbrack p]:i_{n_{t}}\leq q\leq
i_{n_{t}+1}-1\}$, $N_{q}^{(2)}:=\#\{t\in \lbrack p]:i_{n_{t+1}}\leq q\leq
i_{n_{t}+1}-1\}$. We claim that $N_{q}^{(1)}=N_{q}^{(2)}$. This is because,
if we draw a graph such that $f(2t-1)=i_{n_{t}}-1/2$ for $t\in \lbrack p+1]$%
, $f(2t)=i_{n_{t}+1}-1/2$ for $t\in \lbrack p]$, and connect adjacent pairs
of points by lines, then $N_{q}^{(1)},N_{q}^{(2)}$ are respectively the
numbers of upcrossings and downcrossings of $f$ w.r.t. the level $p$, which
must be equal by the fact that $f(2p+1)=f(1)$. From this we see that%
\begin{equation*}
\dprod\limits_{t=1}^{p}\left(
\dprod\limits_{q=i_{n_{t}}}^{i_{n_{t}+1}-1}\rho _{q}\right)
=\dprod\limits_{q}\rho _{q}^{N_{q}^{(1)}}=\dprod\limits_{q}\rho
_{q}^{N_{q}^{(2)}}=\dprod\limits_{t=1}^{p}\dprod%
\limits_{q=i_{n_{t+1}}}^{i_{n_{t}+1}-1}\rho _{q},
\end{equation*}%
which implies that%
\begin{eqnarray*}
(U_{N})_{i_{1}i_{2}}(U_{N})_{i_{2}i_{3}}...(U_{N})_{i_{l}i_{l+1}}
&=&\dprod\limits_{t=1}^{p}\left( \left( -\alpha _{i_{n_{t}}-1}\overline{%
\alpha }_{i_{n_{t}+1}}\right)
\dprod\limits_{q=i_{n_{t}}}^{i_{n_{t}+1}-1}\rho
_{q}\dprod\limits_{q=i_{n_{t+1}}}^{i_{n_{t}+1}-1}\rho _{q}\right)  \\
&=&\dprod\limits_{t=1}^{p}\left( \left( -\alpha _{i_{n_{t}}-1}\overline{%
\alpha }_{i_{n_{t}+1}}\right)
\dprod\limits_{q=i_{n_{t+1}}}^{i_{n_{t}+1}-1}\rho _{q}^{2}\right) =(%
\widetilde{U}_{N})_{i_{1}i_{2}}(\widetilde{U}_{N})_{i_{2}i_{3}}...(%
\widetilde{U}_{N})_{i_{l}i_{l+1}}.
\end{eqnarray*}%
Therefore Tr$((U_{N})^{l})=$Tr$((\widetilde{U}_{N})^{l})$. For $k\in %
\mathds{Z}_{+}$, we consider the terms in $g_{2k}\left( \text{Tr}%
((U_{N})^{l})\right) $. Each term has the form $%
u_{i_{1}i_{2}}u_{i_{2}i_{3}}...u_{i_{l}i_{l+1}}$, where $(i_{1},...,i_{l})%
\in F_{l}$, $u_{i_{s}i_{s+1}}=-\alpha _{i_{s+1}}\overline{\alpha }_{i_{s+1}}$
or $1$ if $i_{s+1}=i_{s}-1$, $u_{i_{s}i_{s+1}}=-\alpha _{i_{s}-1}\overline{%
\alpha }_{i_{s+1}}$ if $i_{s+1}>i_{s}-1$, and $\#\{s\in \lbrack
l]:u_{i_{s}i_{s+1}}\neq 1\}=k$. Let%
\begin{equation*}
\{n_{1},n_{2},...,n_{k}\}=\{s\in \lbrack l]:u_{i_{s}i_{s+1}}\neq 1\}\text{, }%
n_{1}<n_{2}<...<n_{k}.
\end{equation*}%
We define the map $\varphi $ from $\widetilde{D}_{2k,l}$ to a degree $2k$
monomial, such that $\varphi
((i_{1},j_{1},...,i_{k},j_{k}))=\prod_{p=1}^{k}\alpha _{i_{p}}\overline{%
\alpha _{j_{p}}}$. Write%
\begin{equation*}
\Lambda
:=\{(1,2,...,k),(2,3,...,k,1),(3,4,...,k,1,2),...,\{k,1,2,...,k-1\}\}.
\end{equation*}%
Consider the following weight distributing operation: assume that each $%
u_{i_{1}i_{2}}u_{i_{2}i_{3}}...u_{i_{l}i_{l+1}}$ has weight $1$, and
uniformly distributes its weight to $k$ objects: $(i_{n_{\pi
(1)}}-1,i_{n_{\pi (1)}+1},i_{n_{\pi (2)}}-1,i_{n_{\pi (2)}+1},...,i_{n_{\pi
(k)}}-1,i_{n_{\pi (k)}+1})$ for $\pi \in \Lambda $. This operation
corresponds to the following identity%
\begin{equation}
u_{i_{1}i_{2}}u_{i_{2}i_{3}}...u_{i_{l}i_{l+1}}=\frac{1}{k}\sum_{\pi \in
\Lambda }\varphi ((i_{n_{\pi (1)}}-1,i_{n_{\pi (1)}+1},i_{n_{\pi
(2)}}-1,i_{n_{\pi (2)}+1},...,i_{n_{\pi (k)}}-1,i_{n_{\pi (k)}+1})).
\label{eq-weight-distribution}
\end{equation}%
It is easy to verify that $\forall \pi \in \Lambda $, $(i_{n_{\pi
(1)}}-1,i_{n_{\pi (1)}+1},i_{n_{\pi (2)}}-1,i_{n_{\pi (2)}+1},...,i_{n_{\pi
(k)}}-1,i_{n_{\pi (k)}+1})\in \widetilde{D}_{2k,l}$. Conversely, we claim
that for each $n\in \lbrack 2d,N-2d]$ and $(i_{1},j_{1},...,i_{k},j_{k})\in $
$D_{2k,l}$, the term $(n+i_{1},n+j_{1},...,n+i_{k},n+j_{k})$ receives weight 
$l/k$. To see this, clockwisely choose $l$ positions on a circle. Put $%
-\alpha _{n+i_{1}}\overline{\alpha _{n+j_{1}}}$ at the $1$st position, and
put $j_{1}-i_{2}-1$ number of $1$'$s$ in the next positions, then put $%
-\alpha _{n+i_{2}}\overline{\alpha _{n+j_{2}}}$ at the $\left(
j_{1}-i_{2}\right) $th position. Continue doing this until all the $l$
positions are filled. Then we can observe that, each preimage of $%
(n+i_{1},n+j_{1},...,n+i_{k},n+j_{k})$ corresponds to a length-$l$
consecutive sequence on the circle, which has $l$ choices depending on how
to choose the starting position. For $n\in \lbrack 2d,N-2d]$, since the $%
l_{\infty }$ norm of the elements in $D_{2k,l}$ is bounded by $l\leq d$, it
is easy to verify that all the $l$ choices correspond to $l$ terms in $%
g_{2k}\left( \text{Tr}((U_{N})^{l})\right) $. Thus $%
(n+i_{1},n+j_{1},...,n+i_{k},n+j_{k})$ receives weight $l/k$ (in some cases,
due to symmetry these $l$ choices of sequences are not all different. For
example, it is possible that there are just $M$ different choices with $M$
divides $l$. However this makes no influence: in this case, when we start
from each one of these $M$ choices and distribute their weights, there are
just $M$ different elements in $D_{2k,l}$ receiving weights, with each of
them getting $M/k$ weights. Thus we can easily verify that each element in $%
D_{2k,l}$ receives $l/k$ weights). Interpreting the weights as the
coefficients in the summation as (\ref{eq-weight-distribution}), and noting
that the contribution of other terms with $i_{1}\notin \lbrack 2d,N-2d]$ is
controlled by a constant independent of $N$, we complete the proof for $l\in %
\mathds{Z}_{+}$. Similar applies to $l\in \mathds{Z}_{-}$.
\end{proof}

\section{Proofs of the algebra part\label{proof-alg-part}}

In this section we prove all the statements in Section \ref{sub-section-alg}%
. First we show Lemma \ref{lemma-alg-1}.

\begin{proof}[Proof of Lemma \protect\ref{lemma-alg-1}]
Since $\phi _{2k}$ is linear, it is enough to show that if $G^{(3)}\in A_{2k}
$ and $\psi _{2k}(G^{(3)})=0$ in $B_{2k}$, then there exists an $N$%
-independent $C<\infty $ such that $\left\vert \sum_{n=0}^{N}\left[ \phi
_{2k}(G^{(3)})\right] _{n}\right\vert <C$ for all $N$. Since $\psi
_{2k}(G^{(3)})=0$, we can write $G^{(3)}=(\prod_{i=1}^{k}x_{i}y_{i}-1)\left(
\sum_{s=1}^{M}c_{s}\prod_{i=1}^{k}x_{i}^{p_{i,s}}y_{i}^{q_{i,s}}\right) $
where $c_{s}\neq 0,p_{i,s},q_{i,s}\in \mathds{N}$ for $s\in \lbrack M],i\in
\lbrack k]$. By the linearity of $\phi _{2k}$ it suffices to show that $%
\forall s\in \lbrack M]$ there exists an $N$-independent $C_{s}<\infty $
such that $|\sum_{n=0}^{N}[\phi
_{2k}((\prod_{i=1}^{k}x_{i}y_{i}-1)c_{s}%
\prod_{i=1}^{k}x_{i}^{p_{i,s}}y_{i}^{q_{i,s}})]_{n}|<C_{s}$ for all $N$,
which is implied by the fact that $\forall N\in \mathds{Z}_{+}$%
\begin{equation*}
\left\vert \sum_{n=0}^{N}[\phi
_{2k}((\dprod\limits_{i=1}^{k}x_{i}y_{i}-1)c_{s}\dprod%
\limits_{i=1}^{k}x_{i}^{p_{i,s}}y_{i}^{q_{i,s}})]_{n}\right\vert
=c_{s}\left\vert \dprod\limits_{i=1}^{k}\alpha _{N+1+\beta _{i}}\overline{%
\alpha _{N+1+\gamma _{i}}}-\dprod\limits_{i=1}^{k}\alpha _{\beta _{i}}%
\overline{\alpha _{\gamma _{i}}}\right\vert \leq 2c_{s}\text{,}
\end{equation*}%
where in the rightmost inequality we use the fact that $\left\Vert \alpha
\right\Vert _{\infty }\leq 1$.
\end{proof}

Next we show Lemma \ref{lemma-alg-2}.

\begin{proof}[Proof of Lemma \protect\ref{lemma-alg-2}]
Since $\phi _{2k}$ is linear, it is enough to show that if $G^{(3)}\in A_{2k}
$ and $\widetilde{\psi }_{2k}(G^{(3)})=0$ in $\widetilde{B}_{2k}$, then $%
\psi _{2k}(G^{(3)})=0$. We can write $G^{(3)}=(\prod_{i=1}^{k}x_{i}y_{i}-1)(%
\sum_{s=1}^{M}c_{s}\prod_{i=1}^{k}x_{i}^{p_{i,s}}y_{i}^{q_{i,s}})$ where $%
c_{s}\neq 0,p_{i,s},q_{i,s}\in \mathds{Z}$ for $s\in \lbrack M],i\in \lbrack
k]$, and $(p_{i,s},q_{i,s})_{1\leq i\leq k}\,$are different for different $s$%
. We claim that we must have $p_{i,s},q_{i,s}\in \mathds{N}$ for $s\in
\lbrack M],i\in \lbrack k]$. Otherwise, without loss of generality we assume 
$p_{1,1}<0$. Consider the set $\Gamma :=\{s_{0}\in \lbrack M]:s_{0}=\arg
\min \{p_{1,s}\}\}$. Since $(p_{i,s},q_{i,s})_{1\leq i\leq k}\,$are
different for $s\in \Gamma $, $\sum_{s\in \Gamma
}c_{s}y_{i}^{q_{1,s}}\prod_{i=2}^{k}x_{i}^{p_{i,s}}y_{i}^{q_{i,s}}\neq 0$.
Therefore after expanding $G^{(3)}$ according to the degree of $x_{1}$ as an
element in $\mathds{C}\lbrack y_{1},...,x_{k},y_{k},\frac{1}{y_{1}},...,%
\frac{1}{x_{k}},\frac{1}{y_{k}}][x_{1},\frac{1}{x_{1}}]$, there is a term $%
-x_{1}^{\min \{p_{1,s}\}}\sum_{s\in \Gamma
}c_{s}y_{i}^{q_{1,s}}\dprod\limits_{i=2}^{k}x_{i}^{p_{i,s}}y_{i}^{q_{i,s}}$.
Since $\min \{p_{i,s}\}<0$, we see that $G^{(3)}\notin A_{2k}$, leading to a
contradiction. So $p_{i,s},q_{i,s}\in \mathds{N}$ for all $s\in \lbrack
M],i\in \lbrack k]$, and it completes the proof.
\end{proof}

Based on Lemma \ref{lemma-combinatorics}, we prove Theorem \ref{Prop-G-prime}
in the following.

\begin{proof}[Proof of Theorem \protect\ref{Prop-G-prime}]
For each $l\in \mathds{Z}_{+}$, note that $[\phi
_{2k}(\sum_{D_{2k,l}}\prod_{p=1}^{k}x_{_{p}}^{i_{p}}y_{_{p}}^{j_{p}})]_{n}=%
\sum_{D_{2k,l}}\prod_{p=1}^{k}\alpha _{n+i_{p}}\overline{\alpha _{n+j_{p}}}$%
. With Lemma \ref{lemma-combinatorics} it suffices to calculate $%
\sum_{D_{2k,l}}\prod_{p=1}^{k}x_{_{p}}^{i_{p}}y_{_{p}}^{j_{p}}$. Write%
\begin{equation*}
E_{k,l}:=\{(v_{1},...,v_{k}):\forall p\in \lbrack k],v_{p}\in \mathds{N}%
,\sum_{p=1}^{k}v_{p}=l\},
\end{equation*}%
\begin{equation*}
\widetilde{E}_{k,l}:=\{(\widetilde{v}_{1},...,\widetilde{v}_{k}):\forall
p\in \lbrack k],\widetilde{v}_{p}\in \mathds{Z}_{+},\sum_{p=1}^{k}\widetilde{%
v}_{p}=l\}.
\end{equation*}%
It is not hard to see that there is a one-to-one map between $E_{k,l}\times 
\widetilde{E}_{k,l}$ and $D_{2k,l}$ as follows:%
\begin{equation}
i_{p}=\sum_{s=1}^{p-1}\left( v_{s}-\widetilde{v}_{s}\right) ,\text{ }%
j_{p}=i_{p}+v_{p}\text{.}  \label{eq-1-1-map}
\end{equation}%
For $\forall p\in \lbrack k]$, we define%
\begin{equation*}
a_{k,p}:=\dprod\limits_{s\geq p}^{k}y_{s}\dprod\limits_{s\geq p+1}^{k}x_{s},%
\text{ }b_{k,p}:=\dprod\limits_{1\leq s\leq p}^{{}}x_{s}y_{s}.
\end{equation*}%
With the one-to-one map (\ref{eq-1-1-map}), after some algebra we can see
that in $\widetilde{B}_{2k}$%
\begin{equation}
\sum_{D_{2k,l}}\dprod\limits_{p=1}^{k}x_{_{p}}^{i_{p}}y_{_{p}}^{j_{p}}=%
\left( \sum_{E_{k,l}}\dprod\limits_{p=1}^{k}a_{k,p}^{v_{p}}\right) \left(
\sum_{\widetilde{E}_{k,l}}\dprod\limits_{p=1}^{k}b_{k,p}^{\widetilde{v}%
_{p}}\right) .  \label{eq-after-map-1}
\end{equation}%
We claim that%
\begin{equation}
\sum_{E_{k,l}}\dprod\limits_{p=1}^{k}a_{k,p}^{v_{p}}=\sum_{p}\frac{%
a_{k,p}^{l}}{\dprod\limits_{s\neq p}(1-a_{k,s}/a_{k,p})},\text{ }\sum_{%
\widetilde{E}_{k,l}}\dprod\limits_{p=1}^{k}b_{k,p}^{\widetilde{v}%
_{p}}=\sum_{p}\frac{b_{k,p}^{l}}{\dprod\limits_{s\neq p}(b_{k,p}/b_{k,s}-1)}.
\label{eq-claim-geo-sum}
\end{equation}%
One way to prove (\ref{eq-claim-geo-sum}) is by induction on $l$ (also see
(2.9) and (2.10) in \cite{SFHP}. Letting $t=0$ in (2.10) and expanding the
generating function, we get (\ref{eq-claim-geo-sum})). Combining (\ref%
{eq-after-map-1}) and (\ref{eq-claim-geo-sum}), we get%
\begin{equation}
\sum_{D_{2k,l}}\dprod\limits_{p=1}^{k}x_{_{p}}^{i_{p}}y_{_{p}}^{j_{p}}=%
\sum_{1\leq p,q\leq k}\frac{\left( a_{k,p}b_{k,q}\right) ^{l}}{%
\dprod\limits_{s\neq p}(1-a_{k,s}/a_{k,p})\dprod\limits_{t\neq
q}(b_{k,q}/b_{k,t}-1)}.  \label{eq-posi-l-sum}
\end{equation}%
Next we consider $l\in \mathds{Z}_{-}$. Note that $[\phi
_{2k}(\sum_{D_{2k,l}}%
\prod_{p=1}^{k}y_{_{p-1}}^{i_{p}}x_{_{p}}^{j_{p}})]_{n}=\sum_{D_{2k,l}}%
\prod_{p=1}^{k}\overline{\alpha _{n+i_{p}}}\alpha _{n+j_{p}}$ where $%
y_{0}:=y_{k}$. Define%
\begin{equation*}
c_{k,p}:=\dprod\limits_{s\geq p}^{k}x_{s}\dprod\limits_{s\geq p}^{k-1}y_{s},%
\text{ }d_{k,p}:=\dprod\limits_{1\leq s\leq p}^{{}}y_{s-1}x_{s}.
\end{equation*}%
With the similar analysis to the $l>0$ case, we get that in $\widetilde{B}%
_{2k}$,%
\begin{equation*}
\sum_{D_{2k,l}}\dprod\limits_{p=1}^{k}y_{_{p-1}}^{i_{p}}x_{_{p}}^{j_{p}}=%
\left( \sum_{E_{k,l}}\dprod\limits_{p=1}^{k}c_{k,p}^{v_{p}}\right) \left(
\sum_{\widetilde{E}_{k,l}}\dprod\limits_{p=1}^{k}d_{k,p}^{\widetilde{v}%
_{p}}\right) .
\end{equation*}%
Let $e_{k,p}=c_{k,p}$ for $2\leq p\leq k$, and $e_{k,k+1}=e_{k,1}=c_{k,1}/%
\left( \prod_{i=1}^{k}x_{i}y_{i}\right) $. In $\widetilde{B}_{2k}$ we have 
\begin{equation*}
\left( \sum_{E_{k,l}}\dprod\limits_{p=1}^{k}c_{k,p}^{v_{p}}\right) \left(
\sum_{\widetilde{E}_{k,l}}\dprod\limits_{p=1}^{k}d_{k,p}^{\widetilde{v}%
_{p}}\right) =\left(
\sum_{E_{k,l}}\dprod\limits_{p=1}^{k}e_{k,p}^{v_{p}}\right) \left( \sum_{%
\widetilde{E}_{k,l}}\dprod\limits_{p=1}^{k}d_{k,p}^{\widetilde{v}%
_{p}}\right) .
\end{equation*}%
It is easy to verify that $\forall p,q\in \lbrack k]$,%
\begin{eqnarray}
d_{k,p}e_{k,q+1} &=&(\dprod\limits_{i=1}^{k}x_{i}y_{i})^{2}/(a_{k,p}b_{k,q})%
\text{,}  \notag \\
\dprod\limits_{s\neq p}(1-a_{k,s}/a_{k,p})\dprod\limits_{t\neq
q}(b_{k,q}/b_{k,t}-1) &=&\dprod\limits_{s\neq
q}(1-e_{k,s+1}/e_{k,q+1})\dprod\limits_{t\neq p}(d_{k,p}/d_{k,t}-1)\text{.}
\label{eq-relation-d-e-a-b}
\end{eqnarray}%
With (\ref{eq-claim-geo-sum}), (\ref{eq-relation-d-e-a-b}) and some algebra,
we get that in $\widetilde{B}_{2k}$%
\begin{equation}
\left( \sum_{E_{k,l}}\dprod\limits_{p=1}^{k}e_{k,p}^{v_{p}}\right) \left(
\sum_{\widetilde{E}_{k,l}}\dprod\limits_{p=1}^{k}d_{k,p}^{\widetilde{v}%
_{p}}\right) =\sum_{1\leq p,q\leq k}\frac{(a_{k,p}b_{k,q})^{-l}}{%
\dprod\limits_{s\neq p}(1-a_{k,s}/a_{k,p})\dprod\limits_{t\neq
q}(b_{k,q}/b_{k,t}-1)},  \label{eq-e-d-sum}
\end{equation}%
which is in $\widetilde{A}_{2k}$ by (\ref{eq-claim-geo-sum}). Now, combining
(\ref{def-V}), (\ref{eq-posi-l-sum}), (\ref{eq-e-d-sum}) and Lemma \ref%
{lemma-combinatorics}, we see that the following polynomial has the same
image as $G_{2k}$ under $\widetilde{\psi }_{2k}$:%
\begin{equation}
(-1)^{k+1}\frac{1}{kZ_{H}}\sum_{1\leq p,q\leq k}\frac{H(a_{k,p}b_{k,q})-h_{0}%
}{\dprod\limits_{s\neq p}(1-a_{k,s}/a_{k,p})\dprod\limits_{t\neq
q}(b_{k,q}/b_{k,t}-1)}.  \label{eq-same-image-G2k}
\end{equation}%
Finally we show where the term $-1/k$ comes from. According to the
definition of $Z_{H}$, it is easy to verify that $h_{0}=Z_{H}$. Because%
\begin{equation*}
\sum_{1\leq p,q\leq k}\frac{1}{\dprod\limits_{s\neq
p}(1-a_{k,s}/a_{k,p})\dprod\limits_{t\neq q}(b_{k,q}/b_{k,t}-1)}=\sum_{p}%
\frac{1}{\dprod\limits_{s\neq p}(1-a_{k,s}/a_{k,p})}\sum_{p}\frac{1}{%
\dprod\limits_{s\neq p}(b_{k,p}/b_{k,s}-1)}=(-1)^{k},
\end{equation*}%
which could be proved by induction, combined with (\ref{eq-same-image-G2k}), 
$-1/k$ appears and the proof is completed.
\end{proof}

\section{Proofs of the sum rules part\label{proof-sumrules-part}}

In this section we first show that for any $(\theta _{j},m_{j})_{1\leq j\leq
K}$, the degree 2 term $G_{2}$ matches the condition (\ref{condition-gz}).
This match recovers the result in \cite{GZ07}. We then provide the proof of
Theorem \ref{Theorem-n-0}, by the discrete Galiardo-Nirenberg Inequality and
a degree function $L_{2k}$ which relates Lukic's conditions to the algebra
model.

\begin{proof}[Proof of Theorem \protect\ref{theorem-GZ}]
Under the assumption $\alpha \in l^{4}$, we have $\limsup_{N\rightarrow
\infty }\sum_{n=0}^{N=1}(-\left\vert \alpha _{n}\right\vert ^{2}-\log
(1-\left\vert \alpha _{n}\right\vert ^{2}))<\infty $ (for example see
Proposition 4.1 in \cite{BSZ}). Note that in $\widetilde{B}_{2}$,%
\begin{equation*}
H\left( a_{1,1}b_{1,1}\right) =\frac{1}{2^{d}}\dprod%
\limits_{j=1}^{K}(x_{1}y_{1}^{2}-e^{i\theta
_{j}})^{m_{j}}(1/(x_{1}y_{1}^{2})-e^{-i\theta _{1}})^{m_{j}}=\frac{1}{2^{d}}%
\dprod\limits_{j=1}^{K}(y_{1}-e^{i\theta _{j}})^{m_{j}}(x_{1}-e^{-i\theta
_{1}})^{m_{j}}.
\end{equation*}%
Since%
\begin{equation}
\lbrack \phi _{2}(\frac{1}{2^{d}}\dprod\limits_{j=1}^{K}(y_{1}-e^{i\theta
_{j}})^{m_{j}}(x_{1}-e^{-i\theta _{1}})^{m_{j}})]_{n}=\left\vert
(\dprod\limits_{j=1}^{K}(S-e^{-i\theta _{j}})^{m_{j}}\alpha )_{n}\right\vert
^{2},  \label{eq-proof-gz-1}
\end{equation}%
applying Lemma \ref{lemma-alg-1} and Lemma \ref{lemma-alg-2} to $H\left(
a_{1,1}b_{1,1}\right) /Z_{H}$ and $\prod_{j=1}^{K}(y_{1}-e^{i\theta
_{j}})^{m_{j}}(x_{1}-e^{-i\theta _{1}})^{m_{j}}/2^{d}$, with (\ref%
{eq-proof-gz-1}) and Corollary \ref{corollary}, the proof is completed.
\end{proof}

The idea to prove Theorem \ref{Theorem-n-0} is the following. For each $k\in %
\mathds{Z}_{+}$, we define $L_{2k}$, a map from $A_{2k}$ to $\mathds{N}$, as
follows. For\thinspace $F\in A_{2k}$, we first do the Taylor expansion of $F$
at the point $(e^{-i\theta _{1}},e^{i\theta _{1}},e^{-i\theta
_{1}},...,e^{i\theta _{1}})$, such that%
\begin{equation*}
F=\sum_{s=1}^{M}C_{s}\dprod\limits_{p=1}^{k}(x_{p}-e^{-i\theta _{1}})^{\beta
_{p,s}}(y_{p}-e^{i\theta _{1}})^{\gamma _{p,s}}\text{,}
\end{equation*}%
where $M\in \mathds{Z}_{+}$, $\forall s\in \lbrack M],p\in \lbrack k],$ $%
C_{s}\neq 0,\beta _{p,s},\gamma _{p,s}\in $ $\mathds{N}$, and $(\beta
_{p,s},\gamma _{p,s})_{1\leq p\leq k}$ are distinct for different $s$. Then
let%
\begin{equation*}
L_{2k}(F):=\min_{1\leq s\leq M}\left( \sum_{p=1}^{k}\left( \beta
_{p,s}\wedge d+\gamma _{p,s}\wedge d\right) \right) .
\end{equation*}%
Since the Taylor expansion is unique, $L_{2k}$ is well-defined.

\begin{lemma}
\label{lemma-L}Under condition (\ref{condition-1}), if $F\in A_{2k}$ and $%
L_{2k}(F)\geq 2(d+1)-2k$, then%
\begin{equation*}
\limsup_{N\rightarrow \infty }\sum_{n=0}^{N}[\phi _{2k}(F)]_{n}<\infty .
\end{equation*}
\end{lemma}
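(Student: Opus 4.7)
The plan is to prove Lemma \ref{lemma-L} by a two-step reduction: first use the Taylor expansion defining $L_{2k}$ to write $F$ as a sum of monomials in $(x_p - e^{-i\theta_1})$ and $(y_p - e^{i\theta_1})$, then show that each such monomial is mapped by $\phi_{2k}$ to a pointwise product of shifted sequences $(S-e^{-i\theta_1})^j\alpha$, which are summable by a H\"older estimate combining $\alpha \in l^{2d+2}$ and $(S-e^{-i\theta_1})^{d}\alpha \in l^{2}$ via the discrete Gagliardo--Nirenberg inequality.

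By linearity of $\phi_{2k}$, it suffices to treat a single Taylor monomial $T_s := \prod_{p=1}^{k}(x_p-e^{-i\theta_1})^{\beta_{p,s}}(y_p-e^{i\theta_1})^{\gamma_{p,s}}$ with $\sum_{p}\!\bigl((\beta_{p,s}\wedge d)+(\gamma_{p,s}\wedge d)\bigr) \geq 2(d+1)-2k$. Direct binomial expansion together with the definition of $\phi_{2k}$ gives the key identity
\begin{equation*}
[\phi_{2k}(T_s)]_n \;=\; \prod_{p=1}^{k}\bigl((S-e^{-i\theta_1})^{\beta_{p,s}}\alpha\bigr)_n \; \overline{\bigl((S-e^{-i\theta_1})^{\gamma_{p,s}}\alpha\bigr)_n},
\end{equation*}
so the $(x_p - e^{-i\theta_1})$ factors on the algebra side correspond exactly to $(S-e^{-i\theta_1})\alpha$ factors on the sequence side; this is the bridge between $L_{2k}$ and Lukic's conditions (\ref{condition-1}).

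Next I invoke the discrete Gagliardo--Nirenberg inequality: interpolating between $\alpha\in l^{2d+2}$ and $(S-e^{-i\theta_1})^{d}\alpha\in l^2$ yields $(S-e^{-i\theta_1})^{j}\alpha\in l^{r_j}$ with $r_j = 2(d+1)/(j+1)$ for $0\leq j\leq d$; for $j>d$ one writes $(S-e^{-i\theta_1})^{j}=(S-e^{-i\theta_1})^{j-d}\cdot (S-e^{-i\theta_1})^{d}$ and uses that $S-e^{-i\theta_1}$ is bounded on $l^2$, so the same exponent $r_{j\wedge d}$ works. The $2k$ factors in the formula above therefore lie in spaces with reciprocal exponents summing to
\begin{equation*}
\sum_{p=1}^{k}\frac{(\beta_{p,s}\wedge d)+1}{2(d+1)}+\frac{(\gamma_{p,s}\wedge d)+1}{2(d+1)}
\;=\;\frac{\sum_{p}\bigl((\beta_{p,s}\wedge d)+(\gamma_{p,s}\wedge d)\bigr)+2k}{2(d+1)}\;\geq\;1,
\end{equation*}
where the inequality uses precisely the hypothesis $L_{2k}(F)\geq 2(d+1)-2k$. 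Enlarging each exponent so that reciprocals sum to exactly $1$ (using the sequence-space inclusion $l^{r}\subset l^{p}$ for $r\leq p$), H\"older's inequality puts $[\phi_{2k}(T_s)]_n$ in $l^1$, which gives the claimed finite limsup.

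The bookkeeping is routine once the two ingredients are in place, so the main obstacle is really the discrete Gagliardo--Nirenberg interpolation with endpoints $l^{2d+2}$ and $l^{2}$: one has to verify that the standard continuous interpolation transfers cleanly to the shifted operator $S-e^{-i\theta_1}$ acting on sequences with $|\alpha_n|<1$, and that the exponent $r_j = 2(d+1)/(j+1)$ is tight enough to make the H\"older budget close exactly at $L_{2k}(F)=2(d+1)-2k$. A secondary concern is that the Taylor expansion representation of $F$ is finite but the coefficients $C_s$ and exponents $\beta_{p,s},\gamma_{p,s}$ are uncontrolled in size; this is harmless since one only needs finitely many H\"older bounds, each with an $N$-independent constant.
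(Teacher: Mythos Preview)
Your proof is correct and follows essentially the same approach as the paper: reduce by linearity to a single Taylor monomial, use the identity $[\phi_{2k}(T_s)]_n = \prod_{p}((S-e^{-i\theta_1})^{\beta_{p,s}}\alpha)_n\,\overline{((S-e^{-i\theta_1})^{\gamma_{p,s}}\alpha)_n}$, and then combine the discrete Gagliardo--Nirenberg interpolation (Lemma~\ref{lemma-GN}) with H\"older so that the exponent budget closes exactly when $L_{2k}(F)\geq 2(d+1)-2k$. The paper organizes the H\"older step slightly differently---it sets $\lambda:=\sum_p(\widetilde\beta_p+1+\widetilde\gamma_p+1)\geq 2(d+1)$ and uses exponents $\lambda/(\widetilde\beta_p+1)$, $\lambda/(\widetilde\gamma_p+1)$ directly---but this is exactly your ``enlarge exponents so reciprocals sum to $1$'' maneuver written in closed form.
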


To this end, in order to prove Theorem \ref{Theorem-n-0}, it suffices to
show the following lemma.

\begin{lemma}
\label{lemma-G-pp}$\forall k\in \lbrack d]$, there exists $G_{2k}^{\prime
\prime }\in A_{2k}$ such that $G_{2k}^{\prime \prime }=G_{2k}^{\prime }$ in $%
\widetilde{B}_{2k}$, and $L_{2k}(G_{2k}^{\prime \prime })\geq 2(d+1)-2k$.
\end{lemma}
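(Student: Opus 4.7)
The plan is to exploit the factored form $H(z) = 2^{-d}(-e^{-i\theta_1})^d(z - e^{i\theta_1})^{2d}/z^d$ available in the $K=1$ case, combined with the divided-difference structure of the sum defining $G_{2k}'$, and then to use the relation $\prod_{i=1}^k x_iy_i = 1$ in $\widetilde{B}_{2k}$ to redistribute the resulting high-order vanishing across the pairs $(x_i,y_i)$ so that the capped function $L_{2k}$ detects it. Substituting $z = a_{k,p}b_{k,q}$ into the sum $G_{2k}' + \tfrac1k = \tfrac{(-1)^{k+1}}{kZ_H}\sum_{p,q}H(a_{k,p}b_{k,q})/(D_p D_q)$ produces a numerator $(a_{k,p}b_{k,q}-e^{i\theta_1})^{2d}$ which, in the local coordinates $X_i := x_i - e^{-i\theta_1}$ and $Y_i := y_i - e^{i\theta_1}$ at the base point $(e^{-i\theta_1},e^{i\theta_1},\dots)$, vanishes to total order $2d$.

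Next I would recognise the double sum as an iterated bivariate Lagrange divided difference: up to explicit monomial factors, $\sum_{p,q} f(a_{k,p},b_{k,q})/(D_pD_q)$ equals $[a_{k,1},\dots,a_{k,k}]_u [b_{k,1},\dots,b_{k,k}]_v\bigl(u^{k-1}v^{k-1}f(u,v)\bigr)$. Since an iterated divided difference in $k$ nodes lowers the order of vanishing of its argument at a common base point by at most $k-1$ in each set of nodes, applying it to $f(u,v) = (uv-e^{i\theta_1})^{2d}/(uv)^d$ at $(u,v) = (e^{i\theta_1},1)$ shows that $G_{2k}' + \tfrac1k$ vanishes to total order at least $2d - 2(k-1) = 2(d+1)-2k$ in $(X_i,Y_i)$. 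To upgrade this total-degree bound to the capped $L_{2k}$ bound, I would exploit the quotient relation: for $k=1$ via the direct identity $y_1 - e^{i\theta_1} \equiv -e^{i\theta_1}y_1(x_1 - e^{-i\theta_1})\pmod{x_1y_1-1}$, and for $k\ge 2$ via its multivariate analogue derived from $\prod_i x_iy_i - 1$, so that any Taylor monomial carrying a factor $Y_p^r$ with $r>d$ is rewritten modulo $\prod_ix_iy_i - 1$ as $Y_p^d\cdot(\text{unit})\cdot(\text{balanced lower-order factors in other } X_i, Y_j)$. After iterating this rebalancing on every monomial and multiplying by $\bigl(\prod_i x_iy_i\bigr)^N$ for $N$ large to clear any remaining negative powers, one arrives at a representative $G_{2k}''\in A_{2k}$.

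The principal obstacle is the bookkeeping in this last step, together with the residual constant $-\tfrac1k$ in $G_{2k}'$. The rebalancing is highly non-unique and must be performed so that a single representative simultaneously meets the $L_{2k}$ bound on every Taylor monomial without reintroducing low-$L$ terms after later clearing of denominators; a workable approach is induction on the maximal excess $\max_p\bigl((\beta_{p,s}-d)_+ + (\gamma_{p,s}-d)_+\bigr)$ across monomials, at each step subtracting a suitable multiple of $\prod_ix_iy_i-1$ so that this excess strictly decreases while the total-degree bound is preserved. The constant $-\tfrac1k$, which would naively evaluate to a nonzero value at the base point and force $L_{2k}(G_{2k}'')=0$, must be absorbed by grouping it back into the divided-difference numerator via the identity $\sum_{p,q}1/(D_pD_q)=(-1)^{k+1}$ and $h_0=Z_H$: writing $G_{2k}' = \tfrac{(-1)^{k+1}}{kZ_H}\sum_{p,q}(H(a_{k,p}b_{k,q}) - h_0)/(D_pD_q)$, the modified numerator $H(a_{k,p}b_{k,q}) - h_0$ is the one to which the divided-difference estimate must be applied, and the constant $-h_0$ contribution is exactly what feeds into the leading $2(d+1)-2k$-order vanishing once the rebalancing is carried out.
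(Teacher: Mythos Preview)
Your overall architecture matches the paper's: both recognise the double sum in $G_{2k}'$ as an iterated divided difference and both extract a total vanishing order of at least $2d-2(k-1)$ at the base point $(e^{-i\theta_1},e^{i\theta_1},\dots)$. The essential divergence, and the real gap in your proposal, is the passage from this \emph{total} order to the \emph{capped} quantity $L_{2k}$.

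You start from the form $H(z)=c\,(z-e^{i\theta_1})^{2d}/z^d$, take divided differences, and then propose to ``rebalance'' any excess exponent $Y_p^r$ with $r>d$ using the single relation $\prod_i x_iy_i-1$. This step is not justified and, as written, cannot be made to work by the induction you sketch: the relation $\prod_i x_iy_i=1$ supplies only \emph{one} linear constraint among the $2k$ local coordinates $X_i,Y_i$, so replacing $Y_p$ by the other variables merely shifts the excess (e.g.\ a term $Y_1^{r}$ produces, among others, $X_1^{r}$), and the ``maximal excess'' need not decrease. A generic polynomial of total Taylor order $M$ at the base point does \emph{not} admit a representative in $B_{2k}$ with $L_{2k}\ge M$; one needs structural information about the specific polynomial.

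The paper obtains exactly this structural information by working from the start with the \emph{balanced} factorisation
\[
H(z)=2^{-d}\,(z-e^{i\theta_1})^{d}\bigl((\textstyle\prod_i x_iy_i)^2/z-e^{-i\theta_1}\bigr)^{d},
\]
and expanding the iterated divided differences via the explicit Leibniz-type identity for $\bigl(x_1^\beta\prod(x_1-r_i)-x_2^\beta\prod(x_2-r_i)\bigr)/(x_1-x_2)$. Because the $2d$ linear factors alternate between the two types (the ``white/black balls''), each successive division distributes the surviving factors across the nodes in such a way that the resulting exponents $d_{p,q}^{(1)},d_{p,q}^{(2)}$ satisfy $|d_{p,q}^{(1)}-d_{p,q}^{(2)}|\le 1$ and $\max_{i_{p,q}\in\{1,2\}}\sum_{p,q}d_{p,q}^{(i_{p,q})}\le d$. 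It is this combinatorial bound, coming from the alternation, that forces every individual $(x_s-e^{-i\theta_1})$ and $(y_t-e^{i\theta_1})$ exponent in the leading Taylor terms to be at most $d$, so that the cap in $L_{2k}$ is never active and the total-order bound $2d-2(k-1)$ becomes an $L_{2k}$ bound directly. Your unbalanced form $(z-e^{i\theta_1})^{2d}$ loses precisely this alternating structure.

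A secondary point: your handling of the constant $-\tfrac1k$ is circular. Rewriting $G_{2k}'$ with numerator $H-h_0$ does not help, because $(H-h_0)(e^{i\theta_1})=-h_0\neq 0$, so the divided-difference order estimate does not apply to it; and indeed the $-h_0$ contribution to the sum, via $\sum_{p,q}1/(D_pD_q)=\text{const}$, simply reproduces the $-1/k$. In fact the base point lies on the hypersurface $\prod_i x_iy_i=1$, so every representative of $G_{2k}'$ in $\widetilde B_{2k}$ has the same nonzero value $-1/k$ there, forcing $L_{2k}=0$. The paper's proof actually treats the sum $\frac{(-1)^{k+1}}{kZ_H}\sum_{p,q}H(a_{k,p}b_{k,q})/(D_pD_q)$, i.e.\ $G_{2k}'+\tfrac1k$, and it is for \emph{that} object that a representative with $L_{2k}\ge 2(d+1)-2k$ is produced; the $-1/k$ is accounted for separately in Corollary~\ref{corollary} through the explicit $\sum_k|\alpha_n|^{2k}/k$ terms. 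You should treat it the same way rather than try to absorb it.
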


To prove Lemma \ref{lemma-L}, we need the following discrete
Galiardo-Nirenberg Inequality. The references of this inequality are
Gagliardo \cite{Gag}, Nirenberg \cite{Niren}, and also see the the remark of
Theorem 2.5 in \cite{BSZ}, Section 6.3 of Simon \cite{ACCA} and Taylor \cite%
{Taylor}.

\begin{lemma}[Discrete Galiardo-Nirenberg Inequality]
\label{lemma-GN}If $(S-e^{-i\theta _{1}})^{d}\alpha \in l^{2}$ and $\alpha
\in l^{2d+2}$, then for any $j\in \lbrack d+1]$ we have%
\begin{equation*}
(S-e^{-i\theta _{1}})^{j}\alpha \in l^{\frac{2(d+1)}{j+1}}.
\end{equation*}
\end{lemma}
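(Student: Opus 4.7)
Write $D := S - e^{-i\theta_1}$, $\beta := e^{-i\theta_1}$, and $p_j := 2(d+1)/(j+1)$. The hypotheses amount to $\|\alpha\|_{p_0}, \|D^d\alpha\|_{p_d} < \infty$ (the endpoints $j = 0$ and $j = d$), and the goal is to interpolate to $\|D^j\alpha\|_{p_j} < \infty$ for all $j \in [d+1]$. The plan is to port the classical proof of the Gagliardo-Nirenberg inequality to this discrete setting, using summation by parts in place of integration by parts. The crucial exponent identity is
\begin{equation*}
\frac{1}{p_{j-1}} + \frac{1}{p_{j+1}} + \frac{p_j-2}{p_j} = 1, \qquad 1 \le j \le d,
\end{equation*}
which follows directly from $1/p_j = (j+1)/(2(d+1))$ and is precisely the H\"older relation I will invoke below.

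The heart of the argument is the three-term Kolmogorov-Landau inequality
\begin{equation*}
\|D^j\alpha\|_{p_j}^{2} \le C\,\|D^{j-1}\alpha\|_{p_{j-1}}\,\|D^{j+1}\alpha\|_{p_{j+1}}, \qquad 1 \le j \le d.
\end{equation*}
I would begin with $\sum_n |D^j\alpha_n|^{p_j} = \sum_n D^j\alpha_n \cdot g_n$, where $g_n := \overline{D^j\alpha_n}\,|D^j\alpha_n|^{p_j-2}$, truncate $\alpha$ to finite support so that boundary terms vanish, and apply summation by parts via $D^j\alpha_n = D^{j-1}\alpha_{n+1} - \beta D^{j-1}\alpha_n$, rewriting the sum as $\sum_n D^{j-1}\alpha_n(g_{n-1} - \beta g_n)$. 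A discrete product rule on $g_n = \overline{D^j\alpha_n}\cdot|D^j\alpha_n|^{p_j-2}$, together with the identity $D^j\alpha_n - \beta D^j\alpha_{n-1} = D^{j+1}\alpha_{n-1}$, yields a pointwise bound of the form $|g_{n-1} - \beta g_n| \le C|D^{j+1}\alpha_{n-1}|\cdot(|D^j\alpha_n|+|D^j\alpha_{n-1}|)^{p_j-2}$. Triple H\"older with exponents $(p_{j-1}, p_{j+1}, p_j/(p_j-2))$, legitimate by the identity above, then gives $\|D^j\alpha\|_{p_j}^{p_j} \le C\|D^{j-1}\alpha\|_{p_{j-1}}\|D^{j+1}\alpha\|_{p_{j+1}}\|D^j\alpha\|_{p_j}^{p_j-2}$; dividing delivers the three-term bound.

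This inequality expresses log-convexity of $j \mapsto \log\|D^j\alpha\|_{p_j}$ over $0 \le j \le d$, so combining with finiteness at the two endpoints gives $\|D^j\alpha\|_{p_j} < \infty$ for $j \in \{1, \ldots, d\}$ by a short induction (using the truncation to keep all quantities a priori finite and then passing to the limit). For the remaining case $j = d+1$, where $p_{d+1} < 2$ makes the three-term argument break down, I would argue separately: since $D^d\alpha \in l^2 \cap l^{2(d+1)}$ (the latter because $D$ is bounded on $l^{2(d+1)}$ and $\alpha \in l^{2(d+1)}$ by hypothesis), a single summation by parts on $\sum_n |D^{d+1}\alpha_n|^{p_{d+1}}$ combined with H\"older at the conjugate exponent $p_{d+1}' = 2(d+1)/d$ (matched to the $l^{2(d+1)}$ scale of $D^d\alpha$) closes the estimate. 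The main obstacle is the bookkeeping in the second paragraph: the residual piece arising from the first difference of the $|D^j\alpha_n|^{p_j-2}$ factor (which naively carries an extra $|D^j\alpha_{n-1}|$ contribution with a $(1-\beta)$ coefficient from the discrete product rule) must be controlled so that triple H\"older really produces the two clean factors $\|D^{j-1}\alpha\|_{p_{j-1}}\|D^{j+1}\alpha\|_{p_{j+1}}$ without extraneous terms that would spoil the exponent relation.
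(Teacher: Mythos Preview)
The paper does not actually prove this lemma: it is stated with citations to Gagliardo, Nirenberg, the remark after Theorem~2.5 in \cite{BSZ}, Simon \cite{ACCA}, and Taylor \cite{Taylor}, and then used as a black box in the proof of Lemma~\ref{lemma-L}. So there is no in-paper argument to compare against; your proposal amounts to reproducing the classical Kolmogorov--Landau/Gagliardo--Nirenberg proof from those references in the discrete setting, which is exactly what the citations point to.

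For the interpolation range $1\le j\le d-1$ your outline is correct. The obstacle you flag at the end (the ``$(1-\beta)$ residual'') disappears once you exploit $|\beta|=1$. Either conjugate at the outset by $\tilde\alpha_n:=\beta^{-n}\alpha_n$, which turns $D=S-\beta$ into the ordinary forward difference $S-1$ without changing any $l^p$ norm, or argue directly: the map $\phi(z)=\bar z|z|^{p-2}$ satisfies $\phi(\bar\beta z)=\beta\phi(z)$, so $g_{n-1}-\beta g_n=\phi(u_{n-1})-\phi(\bar\beta u_n)$, and since $\phi$ is $C^1$ with $|\nabla\phi(z)|\le (p-1)|z|^{p-2}$ for $p\ge 2$ one gets
\[
|g_{n-1}-\beta g_n|\le C\,|u_{n-1}-\bar\beta u_n|\,(|u_{n-1}|+|u_n|)^{p_j-2}=C\,|D^{j+1}\alpha_{n-1}|\,(|u_{n-1}|+|u_n|)^{p_j-2},
\]
with no leftover term. (A small slip: you state the three-term bound for $1\le j\le d$, but at $j=d$ the right-hand side involves $\|D^{d+1}\alpha\|_{p_{d+1}}$; the convexity argument only needs $1\le j\le d-1$.)

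Your separate treatment of $j=d+1$, however, cannot work as sketched --- the weight $|z|^{p_{d+1}-2}$ is singular at $0$ since $p_{d+1}<2$ --- and in fact the assertion for $j=d+1$ is false in general. Take $d=1$, $\theta_1=0$, and $\alpha_n=(-1)^n n^{-1/2-\epsilon}$ with $0<\epsilon\le 1/4$: then $\alpha\in l^4$ and $|(S-1)\alpha_n|\asymp n^{-1/2-\epsilon}$ gives $(S-1)\alpha\in l^2$, but $|(S-1)^2\alpha_n|\asymp n^{-1/2-\epsilon}$ as well, so $(S-1)^2\alpha\notin l^{4/3}$. The ``$j\in[d+1]$'' in the paper is most likely a slip for $j+1\in[d+1]$, i.e.\ $j\in\{0,\dots,d\}$; only $j\le d$ is ever used downstream (in the proof of Lemma~\ref{lemma-L} one has $\tilde\beta_p,\tilde\gamma_p\le d$), so restrict your proof to that range.
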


\begin{remark}
Since in our case $\left\Vert \alpha \right\Vert _{\infty }\leq 1$, under
the same conditions we have $(S-e^{-i\theta _{1}})^{q}\alpha \in l^{\frac{%
2(d+1)}{j+1}}$ for any $q\geq j$.
\end{remark}

Now we prove Lemma \ref{lemma-L}.

\begin{proof}[Proof of Lemma \protect\ref{lemma-L}]
It suffices to prove it for $F=\prod_{p=1}^{k}(x_{p}-e^{-i\theta
_{1}})^{\beta _{p}}(y_{p}-e^{i\theta _{1}})^{\gamma _{p}}$ with%
\begin{equation*}
\sum_{p=1}^{k}\left( \beta _{p}\wedge d+\gamma _{p}\wedge d\right) \geq
2(d+1)-2k.
\end{equation*}%
Write $\widetilde{\beta }_{p}=\beta _{p}\wedge d$ and $\widetilde{\gamma }%
_{p}=\gamma _{p}\wedge d$. Let $\lambda :=\sum_{p=1}^{k}\left( \widetilde{%
\beta }_{p}+1+\widetilde{\gamma }_{p}+1\right) $, then with the H\H{o}lder's
Inequality we have{\small 
\begin{equation}
\sum_{n}\dprod\limits_{p=1}^{k}\left\vert \left( (S-e^{-i\theta
_{1}})^{\beta _{p}}\alpha \right) _{n}\right\vert \left\vert \left(
(S-e^{-i\theta _{1}})^{\gamma _{p}}\alpha \right) _{n}\right\vert \leq
\dprod\limits_{p=1}^{k}||\left( (S-e^{-i\theta _{1}})^{\beta _{p}}\alpha
\right) ||_{\frac{\lambda }{\widetilde{\beta }_{p}+1}}||\left(
(S-e^{-i\theta _{1}})^{\gamma _{p}}\alpha \right) ||_{\frac{\lambda }{%
\widetilde{\gamma }_{p}+1}}.  \label{eq-proof-lemma-L}
\end{equation}%
}Recalling that $[\phi _{2k}(F)]_{n}=\prod_{p=1}^{k}((S-e^{-i\theta
_{1}})^{\beta _{p}}\alpha )_{n}\overline{((S-e^{-i\theta _{1}})^{\gamma
_{p}}\alpha )_{n}}$, with the fact that $\left\Vert \alpha \right\Vert
_{\infty }\leq 1$ and $\lambda \geq 2(d+1)$, we finish the proof by (\ref%
{eq-proof-lemma-L}) and Lemma \ref{lemma-GN}.
\end{proof}

Next we show Lemma \ref{lemma-G-pp}. First we provide a method to calculate
the polynomial $G_{2k}^{\prime }$. Define an operator $D(x_{1},...,x_{n})(%
\cdot )$ from $\mathbb{C}[x,1/x]$ to $\mathbb{C}%
[x_{1},...,x_{n},1/x_{1},...,1/x_{n}]$ as follows: for $f(x)=%
\sum_{i=-d_{1}}^{d_{2}}c_{i}x^{i}$, let%
\begin{equation*}
D(x_{1},...,x_{n})(f):=\sum_{i=1}^{n}\frac{f(x_{i})}{\Pi _{j\neq
i}(x_{j}-x_{i})}\text{.}
\end{equation*}%
We can observe that $D(x_{1},...,x_{n})(f)$ is a Hall-Littlewood type
polynomial, and%
\begin{equation}
D(x_{1},...,x_{n})(f)=\frac{%
D(x_{1},x_{3},...,x_{n})(f)-D(x_{2},x_{3},...,x_{n})(f)}{x_{2}-x_{1}}.
\label{eq-D-iterative}
\end{equation}

\begin{proof}[Proof of Lemma \protect\ref{lemma-G-pp}]
Let%
\begin{equation*}
f_{1}(b_{k,q},x):=\left( xb_{k,q}-e^{i\theta _{1}}\right) ^{d}\left( (\Pi
_{i}x_{i}y_{i})^{2}/(xb_{k,q})-e^{-i\theta _{1}}\right)
^{d}x^{k-1}b_{k,q}^{-1},
\end{equation*}%
\begin{equation*}
f_{2}(a_{k,1},...,a_{k,k},x):=D(a_{k,1},...,a_{k,k})(f(x,\cdot )).
\end{equation*}%
After some algebra we can see that%
\begin{equation}
\sum_{1\leq p,q\leq k}\frac{\left( a_{k,p}b_{k,q}-e^{i\theta _{1}}\right)
^{d}\left( (\Pi _{i}x_{i}y_{i})^{2}/(a_{k,p}b_{k,q})-e^{-i\theta
_{1}}\right) ^{d}}{\dprod\limits_{s\neq
p}(1-a_{k,s}/a_{k,p})\dprod\limits_{t\neq q}(b_{k,q}/b_{k,t}-1)}%
=D(b_{k,1},...,b_{k,k})(f_{2}(a_{k,1},...,a_{k,k},\cdot
))\dprod\limits_{t=1}^{k}b_{k,t}.  \label{eq-final-1}
\end{equation}%
Note that for any $r_{1},...,r_{M}\in \mathbb{C}$ and $\beta \in \mathbb{Z}$%
, we have%
\begin{equation}
\frac{x_{1}^{\beta }\dprod\limits_{i=1}^{M}(x_{1}-r_{i})-x_{2}^{\beta
}\dprod\limits_{i=1}^{M}(x_{2}-r_{i})}{x_{1}-x_{2}}=(\sum_{j=0}^{\beta
-1}x_{1}^{j}x_{2}^{\beta
-1-j})\dprod\limits_{i=1}^{M}(x_{1}-r_{i})+x_{2}^{\beta
}(\sum_{i=1}^{M}\dprod\limits_{s<i}(x_{1}-r_{s})\dprod%
\limits_{t>i}(x_{2}-r_{t})),  \label{eq-expand}
\end{equation}%
where in the right hand side each term contains a factor in the form of%
\begin{equation}
\Pi _{s<i}(x_{1}-r_{s})\Pi _{t>i}(x_{2}-r_{t}),\text{with }i\in
\{0,1,2,...,M\}.  \label{eq-cut-argument}
\end{equation}%
Write{\small 
\begin{eqnarray}
&&\left( xb_{k,q}-e^{i\theta _{1}}\right) ^{d}\left( \frac{(\Pi
_{i}x_{i}y_{i})^{2}}{xb_{k,q}}-e^{-i\theta _{1}}\right) ^{d}  \notag \\
&=&\left( xb_{k,q}-e^{i\theta _{1}}\right) \left( \frac{(\Pi
_{i}x_{i}y_{i})^{2}}{xb_{k,q}}-e^{-i\theta _{1}}\right) ...\left(
xb_{k,q}-e^{i\theta _{1}}\right) \left( \frac{(\Pi _{i}x_{i}y_{i})^{2}}{%
xb_{k,q}}-e^{-i\theta _{1}}\right) ,  \label{eq-rewrite}
\end{eqnarray}%
}where the right hand side is a multiplication of $2d$ terms. With (\ref%
{eq-D-iterative}), (\ref{eq-expand}) and (\ref{eq-rewrite}), it is not hard
to observe that, we can express (\ref{eq-final-1}) as a summation, where
each term in this summation contains a factor like%
\begin{equation}
\dprod\limits_{p,q}\left( a_{k,p}b_{k,q}-e^{i\theta _{1}}\right)
^{d_{p,q}^{(1)}}\left( \frac{(\Pi _{i}x_{i}y_{i})^{2}}{xb_{k,q}}-e^{-i\theta
_{1}}\right) ^{d_{p,q}^{(2)}}=\dprod\limits_{p,q}\left(
a_{k,p}b_{k,q}-e^{i\theta _{1}}\right) ^{d_{p,q}^{(1)}}\left(
d_{k,p}e_{q+1}-e^{-i\theta _{1}}\right) ^{d_{p,q}^{(2)}}.
\label{eq-contain-factor}
\end{equation}%
What's more, each $(d_{p,q}^{(1)},d_{p,q}^{(2)})_{p,q\in \lbrack k]}$
corresponds to some $(\widetilde{d}_{p,q}^{(1)},\widetilde{d}%
_{p,q}^{(2)})_{p,q\in \lbrack k]}$ generated as follows. Put $d$ white balls
and $d$ black balls alternately, that is, White, Black, ..., White, Black.
Here White stands for $\left( xb_{k,q}-e^{i\theta _{1}}\right) $, and Black
stands for $\left( (\Pi _{i}x_{i}y_{i})^{2}/(xb_{k,q})-e^{-i\theta
_{1}}\right) $. Choose $0=z_{0}\leq z_{1}\leq z_{2}\leq ...\leq z_{k-1}\leq
z_{k}=2d$ where $z_{i}\in \{0,1,...,2d\}$ $\forall i\in \lbrack k-1]$, and $%
0=w_{0}\leq w_{1}\leq w_{2}\leq ...\leq w_{k-1}\leq w_{k}=2d$ where $%
w_{i}\in \{0\}\cup \{\{0,1,...,d\}\backslash \{z_{1},...,z_{k-1}\}\}$ $%
\forall i\in \lbrack k-1]$, then let%
\begin{eqnarray*}
\widetilde{d}_{p,q}^{(1)} &:&=\#\{\text{White balls in }[z_{p-1},z_{p}]\cap
\lbrack w_{q-1},w_{q}]\}, \\
\widetilde{d}_{p,q}^{(2)} &:&=\#\{\text{Black balls in }[z_{p-1},z_{p}]\cap
\lbrack w_{q-1},w_{q}]\}.
\end{eqnarray*}%
The correspondence between $(d_{p,q}^{(1)},d_{p,q}^{(2)})_{p,q\in \lbrack k]}
$ and $(\widetilde{d}_{p,q}^{(1)},\widetilde{d}_{p,q}^{(2)})_{p,q\in \lbrack
k]}$ could be observed by (\ref{eq-cut-argument}). Therefore, we see that $%
d_{p,q}^{(1)},d_{p,q}^{(2)}\in \mathbb{[}d\mathbb{]}$, $\left\vert
d_{p,q}^{(1)}-d_{p,q}^{(2)}\right\vert \leq 1$, and%
\begin{equation}
\sum_{p,q}(d_{p,q}^{(1)}+d_{p,q}^{(2)})\geq 2d-2(k-1).  \label{eq-sum-d-p-q}
\end{equation}%
where the last inequality holds because in order to get (\ref{eq-final-1}),
we need to do the operation like (\ref{eq-D-iterative}) for $2(k-1)$ times,
and each operation at most reduce degree $1$ for these factors. Note that
there are at most $2k-1$ pairs $(p,q)$ with $[z_{p-1},z_{p}]\cap \lbrack
w_{q-1},w_{q}]\neq \emptyset $, thus $\sum |d_{p,q}^{(1)}-d_{p,q}^{(2)}|\leq
2k-1$, and with (\ref{eq-sum-d-p-q}) we see that%
\begin{equation}
\max_{i_{p,q}\in \{1,2\}\text{ }\forall p,q\in \lbrack
k]}\sum_{p,q}d_{p,q}^{(i_{p,q})}\leq d\text{.}  \label{eq-max-d-p-q-d}
\end{equation}%
Now consider any $f_{3},f_{4}\in A_{2k}$ where%
\begin{equation*}
f_{3}=\Pi _{p,q}\left( a_{k,p}b_{k,q}-e^{i\theta _{1}}\right)
^{d_{p,q}^{(1)}}\left( d_{k,p}e_{q+1}-e^{-i\theta _{1}}\right)
^{d_{p,q}^{(2)}}f_{4}.
\end{equation*}%
By the definition of $a_{k,p}b_{k,q}$ and $d_{k,p}e_{q+1}$, we can see that $%
f_{3}$ has the same image with the following polynomial $\widetilde{f}_{3}$
under $\phi _{2k}$:%
\begin{equation*}
\widetilde{f}_{3}:=\Pi _{p,q}\left( \Pi _{s\in I_{p,q}^{(1)},t\in
I_{p,q}^{(2)}}x_{s}y_{t}-e^{i\theta _{1}}\right) ^{d_{p,q}^{(1)}}\left( \Pi
_{s\in \lbrack k]\backslash I_{p,q}^{(1)},t\in \lbrack k]\backslash
I_{p,q}^{(2)}}x_{s}y_{t}-e^{-i\theta _{1}}\right) ^{d_{p,q}^{(2)}}f_{4}
\end{equation*}%
with $I_{p,q}^{(1)},I_{p,q}^{(2)}\subset \lbrack k]$ and $%
|I_{p,q}^{(1)}|=|I_{p,q}^{(2)}|-1$. Expand each $\Pi _{s\in
I_{p,q}^{(1)},t\in I_{p,q}^{(2)}}x_{s}y_{t}-e^{i\theta _{1}}$ as a Taylor
series, whose degree $1$ terms are exactly $\sum_{s\in I_{p,q}^{(1)}}\left(
x_{s}-e^{-i\theta _{1}}\right) +\sum_{t\in I_{p,q}^{(2)}}\left(
y_{t}-e^{i\theta _{1}}\right) $, and each higher degree term is divided by
some degree $1$ term. We claim that if we apply $L_{2k}$ on each lowest
degree term in the Taylor expansion of $\widetilde{f}_{3}/f_{4}$, the result 
$\geq 2d-2(k-1)$, since for each such term, the degree of any $\left(
x_{s}-e^{-i\theta _{1}}\right) $ and $\left( y_{t}-e^{i\theta _{1}}\right) $
is $\leq d$ by (\ref{eq-max-d-p-q-d}), and the total degree is $\geq
2d-2(k-1)$ by (\ref{eq-sum-d-p-q}). Noting that each term in the Taylor
expansion of $\widetilde{f}_{3}/f_{4}$ is divided by some lowest degree term
in the expansion, we see that $L_{2k}(\widetilde{f}_{3})\geq 2d-2(k-1)$, and
the proof is completed by (\ref{eq-contain-factor}).
\end{proof}

\noindent \textbf{Acknowledgements} The author thank Ofer Zeitouni for introducing
this problem to him, and Amir Dembo, Ofer Zeitouni for many helpful
discussions and comments.

{\small 
\bibliographystyle{plain}
\bibliography{LC}
}

\bigskip

\bigskip

\bigskip

\bigskip

\bigskip

\end{document}